\newcommand{\CD}{\mathcal{D}}
\newcommand{\CF}{\mathcal{F}}
\newcommand{\CH}{\mathcal{H}}
\newcommand{\CL}{\mathcal{L}}
\newcommand{\CN}{\mathcal{N}}
\newcommand{\CO}{\mathcal{O}}
\DeclareMathOperator{\Cov}{Cov}
\let\d=\partial
\newcommand{\DX}{\Delta x}
\def\e{\mathrm{e}}
\def\E{\mathbb{E}}
\let\eps=\varepsilon
\def\exact{^\mathrm{exact}}
\let\la=\langle
\newcommand{\N}{\mathbb{N}}
\let\Om=\Omega
\renewcommand{\P}{\mathbb{P}}
\let\phi=\varphi
\def\quark{\setbox0\hbox{$x$}\hbox to\wd0{\hss$\cdot$\hss}}
\let\ra=\rangle
\def\rC{\mathrm{C}}
\def\rH{\mathrm{H}}
\def\rL{\mathrm{L}}
\def\R{\mathbb{R}}
\def\SLR{\sigma_{LR}}
\def\SL{\sigma_{L}}
\def\SR{\sigma_{R}}
\DeclareMathOperator{\Var}{Var}
\def\Z{\mathbb{Z}}
\newenvironment{jvlist}{\begin{list}{}{%
      \setlength{\leftmargin}{2em}
      \setlength{\labelwidth}{1em}
      \setlength{\labelsep}{0.5em}
      \setlength{\topsep}{0pt}
      \setlength{\parsep}{\parskip}
      \setlength{\itemsep}{0pt}}}{\end{list}}
\begin{document}

\title{The Effect of Finite Element Discretisation on the
  Stationary Distribution of SPDEs}
\author{Jochen Voss}
\date{20th October 2011}
\maketitle

\begin{abstract}
  This article studies the effect of discretisation error on the
  stationary distribution of stochastic partial differential equations
  (SPDEs).  We restrict the analysis to the effect of space
  discretisation, performed by finite element schemes.  The main
  result is that under appropriate assumptions the stationary
  distribution of the finite element discretisation converges in total
  variation norm to the stationary distribution of the full SPDE.
\end{abstract}

\bigskip

\noindent
\textbf{keywords:} SPDEs, finite element discretisation, stationary
distribution

\noindent
\textbf{MSC2010 classifications:} 60H35, 60H15, 65C30

\section*{Introduction}

In this article we consider the finite element discretisation for
stochastic partial differential equations (SPDEs) of the form
\begin{equation}\label{E:simple}
  \d_t u(t,x)
    = \d_x^2 u(t,x)
      + f\bigl(u(t,x)\bigr)
      + \sqrt{2}\, \d_t w(t,x)
  \qquad \forall (t,x) \in [0,\infty)\times [0, 1],
\end{equation}
where $\d_t w$ is space-time white noise and $f\colon \R\to \R$ is a
smooth function with bounded derivatives, and the differential
operator~$\d_x^2$ is equipped with boundary conditions such that it is
a negative operator on the space $\rL^2\bigl([0,1], \R\bigr)$.  More
specifically, we are considering the effect that discretisation of the
SPDE has on its stationary distribution.

Our motivation for studying this problem lies in a recently
proposed, SPDE-based sampling technique: when trying to sample from a
distribution on path-space, \textit{e.g.}\ in filtering/smoothing
problems to sample from the conditional distribution of a process
given some observations, one can do so using a Markov chain Monte
Carlo approach.  Such MCMC methods requires a process with values in
path-space and it transpires that in some situations SPDEs of the
form~\eqref{E:simple} can be used, see \textit{e.g.}\
\citet{HaiStuaVoWi05,HaiStuaVo07} and \citet{HaiStuaVo09} for a
review.  When implementing the resulting methods on a computer, the
sampling SPDEs must be discretised and, because MCMC methods use the
sampling process only as a source of samples from its stationary
distribution, the effect of the discretisation error on an MCMC method
depends on how well the stationary distribution of the SPDE is
approximated.  While there are many results of approximation of
trajectories of SPDEs
\citep[\textit{e.g.}][]{MR2147242,Wa05,Hau08,MR2465711,Jen11}, approximation
of the stationary distribution seems not to be well-studied so far.

When discretising an SPDE, discretisation of space and time can be
considered to be two independent problems.  In cases where only the
stationary distribution of the process is of interest, Metropolis
sampling, using the next time step of the time discretisation as a
proposal, can be used to completely eliminate the error introduced by
time discretisation \citep{BeRoStuaVo08}.  For this reason, in this
article we restrict the analysis to the effect of space discretisation
alone.  The discretisation technique discussed here is a finite
element discretisation, which is a much-studied technique for
deterministic PDEs.  The approximation problem for {\em stochastic}
PDEs, as studied in this article, differs from the deterministic case
significantly, since here we have to compare the full {\em
  distribution} of the solutions instead of considering the
approximation of the solution as a function.

While the results of this article are formulated for SPDEs with values
in $\R$, we expect the results and techniques to carry over to SPDEs
with values in $\R^d, d>1$ without significant changes.  We only restrict
discussion to the one-dimensional case to ease notation.
This is in contrast to the domain of the SPDEs: we
consider the case of one spacial dimension because this is the
relevant case for the sampling techniques discussed above, but this choice
significantly affects the proofs and a different approach
would likely be required to study the case of higher-dimensional
spatial domains.

\bigskip

The text is structured as follows: In section~\ref{S:SPDE} present the
required results to characterise the stationary distribution of the
SPDE~\eqref{E:simple}.  In section~\ref{S:SDE} we introduce the finite
element discretisation scheme for~\eqref{E:simple} and identify the
stationary distribution of the discretised equation.  Building on
these results, in section~\ref{S:result}, we state our main result
about convergence of the discretised stationary distributions to the
full stationary distribution.  Finally, in section~\ref{S:example}, we
give two examples in order to illustrate the link to the MCMC methods
discussed above and also to demonstrate that the considered finite
element discretisation forms a concrete and easily implemented
numerical scheme.

\section{The Infinite-Dimensional Equation}
\label{S:SPDE}

In order to study the SPDE~\eqref{E:simple}, it is convenient to
rewrite the equation as an evolution equation on the Hilbert space
$\CH = \rL^2\bigl([0,2\pi],\R^d\bigr)$; For a description of the
underlying theory we refer, for example, to the monograph of
\citet{ZDP}.  We consider
\begin{equation}\label{E:SPDE}
  du(t) = \CL u(t) \,dt + f\bigl(u(t)\bigr) + \sqrt{2}\, dw(t)
  \qquad \forall t \geq 0
\end{equation}
where the solution $u$ takes values in $\CH$ and $f$ acts pointwise on
$u$, \textit{i.e.}\ $f(u)(x) = \tilde f\bigl(u(x)\bigr)$ for almost
all $x\in[0,1]$ for some function $\tilde f\colon \R^d \to \R^d$, such
that $f$ maps $\CH$ into itself.  Furthermore, $w$ is an
$\rL^2$-cylindrical Wiener process and we equip the linear operator $\CL
= \d_x^2$ with boundary conditions given by the domain
\begin{equation}\label{E:domain}
  \CD(\CL)
  = \bigl\{ u\in \rH^2([0,1], \R) \bigm|
                \alpha_0 u(0) - \beta_0 \d_x u(0) = 0, \;
                \alpha_1 u(1) + \beta_1 \d_x u(1) = 0 \bigr\}
\end{equation}
where $\alpha_0, \alpha_1, \beta_0, \beta_1\in\R$.  The boundary
conditions in~\eqref{E:domain} include the cases of Dirichlet
($\beta_i=0$) and v.~Neumann ($\alpha_i=0$) boundary conditions.  The
general case of $\alpha_i, \beta_i \neq 0$ is known as Robin
boundary conditions.

We start our analysis by considering the linear equation
\begin{equation}\label{E:linear}
  du(t) = \CL u(t) \,dt + \sqrt{2}\, dw(t) \qquad \forall t \geq 0.
\end{equation}
For equation~\eqref{E:linear} to have a stationary distribution, we
require $\CL$ to be negative definite.  The following lemma states
necessary and sufficient conditions on $\alpha_i$ and $\beta_i$ for
this to be the case.

\begin{lemma}\label{L:cond-neg}
  The operator $\CL$ is a self-adjoint operator on the Hilbert space
  $\CH$.  The operator $\CL$ is negative definite, if
  and only if $\alpha_0, \beta_0, \alpha_1, \beta_1$ are
  contained in the set
  \begin{multline*}
    A = \Bigl\{
      \beta_0(\alpha_0+\beta_0) >0,
      \beta_1(\alpha_1+\beta_1) >0,
      \big| (\alpha_0+\beta_0)(\alpha_1+\beta_1)\bigr| > | \beta_0\beta_1| \Bigr\} \\
    \cup \Bigl\{
      \beta_0 = 0, \alpha_0 \neq 0, \;
      \beta_1(\alpha_1+\beta_1) >0 \Bigr\}
    \cup \Bigl\{
      \beta_0(\alpha_0+\beta_0) >0, \;
      \beta_1 = 0, \alpha_1 \neq 0 \Bigr\} \\
    \cup \Bigl\{
      \beta_0 = 0, \alpha_0 \neq 0, \;
      \beta_1 = 0, \alpha_1 \neq 0 \Bigr\}.
  \end{multline*}
\end{lemma}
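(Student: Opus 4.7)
The plan is to derive both conclusions from the first Green's identity
\begin{equation*}
  \langle \CL u, v\rangle
  = u'(1) v(1) - u'(0) v(0) - \int_0^1 u'(x) v'(x)\,dx,
\end{equation*}
valid for $u,v\in\CD(\CL)$. Self-adjointness reduces to symmetry plus standard Sturm--Liouville theory: the boundary contributions to $\langle \CL u, v\rangle - \langle u, \CL v\rangle$ vanish because, in the Dirichlet case $\beta_i=0$ both $u$ and $v$ are zero at the endpoint, while in the Robin case $u'(i)$ and $v'(i)$ are proportional to $u(i)$ and $v(i)$ with the common factor $\pm\alpha_i/\beta_i$. Self-adjointness of the operator on the stated domain then follows from the classical theory of regular Sturm--Liouville problems with two separated boundary conditions, which I would cite rather than reprove via deficiency indices.

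Setting $v=u$ in the identity and using the boundary conditions to eliminate $u'(0)$ and $u'(1)$ gives, in the generic Robin case $\beta_0,\beta_1\neq 0$,
\begin{equation*}
  -\langle \CL u, u\rangle
  = \|u'\|^2 + \frac{\alpha_0}{\beta_0}\, u(0)^2 + \frac{\alpha_1}{\beta_1}\, u(1)^2
  =: Q(u),
\end{equation*}
with the corresponding boundary term omitted whenever a Dirichlet condition is present. Because $\CL$ has compact resolvent, negative-definiteness of $\CL$ is equivalent to strict positivity of $Q$ on all non-zero elements of the associated closed form domain, namely $\rH^1([0,1])$ with essential vanishing at any Dirichlet endpoint.

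The key manipulation is then to decompose every such $u$ as $u = \ell + v$, where $\ell$ is the affine function with $\ell(0)=u(0)$ and $\ell(1)=u(1)$ and $v$ vanishes at both endpoints. Since $\ell'$ is constant and $\int_0^1 v'\,dx = v(1)-v(0) = 0$, the cross term in $\|\ell'+v'\|^2$ drops out and
\begin{equation*}
  Q(u) = \tilde Q\bigl(u(0), u(1)\bigr) + \|v'\|^2,
\end{equation*}
where $\tilde Q(p,q) = (1+\alpha_0/\beta_0)\,p^2 - 2pq + (1+\alpha_1/\beta_1)\,q^2$. As $\|v'\|^2$ is strictly positive unless $v\equiv 0$ and can be chosen independently of the boundary values, $Q$ is positive definite on the form domain if and only if $\tilde Q$ is a positive-definite quadratic form on $\R^2$. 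Sylvester's criterion yields the three conditions $1+\alpha_0/\beta_0>0$, $1+\alpha_1/\beta_1>0$ and $(1+\alpha_0/\beta_0)(1+\alpha_1/\beta_1)>1$; multiplying the first two by $\beta_0^2$ and $\beta_1^2$ reproduces the inequalities $\beta_0(\alpha_0+\beta_0)>0$ and $\beta_1(\alpha_1+\beta_1)>0$, and under these the signs of $(\alpha_0+\beta_0)(\alpha_1+\beta_1)$ and $\beta_0\beta_1$ coincide, so the third condition is equivalent to the absolute-value inequality that defines the first component of $A$. In the Dirichlet cases one of $p$, $q$ is forced to vanish in $\tilde Q$ and the remaining scalar positivity condition collapses to $\beta_j(\alpha_j+\beta_j)>0$ at the surviving endpoint, or becomes automatic when both endpoints are Dirichlet; this recovers the other three components of $A$.

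The subtle point to record carefully is the passage from strict positivity of $Q$ on the form domain to the coercivity $Q(u)\geq c\|u\|^2$ that is actually needed for negative-definiteness. This step uses compactness of the resolvent of $\CL$ together with a Poincaré inequality on $\{v\in\rH^1 : v(0)=v(1)=0\}$; both ingredients are standard and would be recorded explicitly in the write-up.
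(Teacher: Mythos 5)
Your argument is correct, but it follows a genuinely different route from the paper. The paper works on the spectral side: it writes down the candidate eigenfunctions $c_1\e^{\sqrt{\lambda}x}+c_2\e^{-\sqrt{\lambda}x}$ for $\lambda\geq 0$, reduces the existence of a non-negative eigenvalue to a zero of the transcendental function $f(\lambda)=\alpha_0\alpha_1+(\alpha_0\beta_1+\alpha_1\beta_0)\sqrt{\lambda}\coth\sqrt{\lambda}+\beta_0\beta_1\lambda$, and then performs a sign analysis of $f$ on $[0,\infty)$ (after a WLOG normalisation of the signs of $(\alpha_i,\beta_i)$ and a case split on whether $(\xi_0+1)(\xi_1+1)$ is $<1$, $=1$ or $>1$ with $\xi_i\leq -1$). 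You instead work on the variational side: integrate by parts to get the Dirichlet form $Q(u)=\|u'\|^2+\tfrac{\alpha_0}{\beta_0}u(0)^2+\tfrac{\alpha_1}{\beta_1}u(1)^2$, split $u$ orthogonally into its affine interpolant and a part vanishing at both endpoints, and reduce everything to Sylvester's criterion for the $2\times 2$ form $\tilde Q(p,q)=(1+\alpha_0/\beta_0)p^2-2pq+(1+\alpha_1/\beta_1)q^2$. The algebra checks out: $1+\alpha_i/\beta_i>0$ is indeed $\beta_i(\alpha_i+\beta_i)>0$ after multiplying by $\beta_i^2$, and under those two conditions the determinant condition is exactly the absolute-value inequality in $A$; the Dirichlet degenerations also come out right. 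Your route avoids the $\coth$ analysis and the sign normalisation entirely and makes the algebraic shape of $A$ transparent, at the price of invoking the form--operator correspondence and a min-max/compact-resolvent argument to pass between strict positivity of $Q$ on the form domain and negative definiteness of $\CL$ -- a step you correctly flag and which must be written out, since the affine test functions you use to disprove definiteness generally lie in the form domain but not in $\CD(\CL)$. The paper's approach, by contrast, yields the eigenvalue equation itself as a byproduct. One small point both treatments gloss over: the fully degenerate case $\alpha_i=\beta_i=0$, which lies outside $A$, needs a separate remark since there the boundary condition is vacuous.
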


\begin{proof}
  From the definition of $\CL$ it is easy to see that the operator is
  self-adjoint.

  We have to show that $\CL$ is negative if and only if $(\alpha_0,
  \beta_0, \alpha_1, \beta_1)\in A$.  Without loss of generality we
  can assume $\beta_i \geq 0$ for $i=1,2$ and $\alpha_i \geq 0$ whenever
  $\beta_i = 0$ (since we can replace $(\alpha_i, \beta_i)$ by
  $(-\alpha_i, -\beta_i)$ if required).  Assume that $\lambda$ is an
  eigenvalue of $\CL$.  If $\lambda>0$, the corresponding
  eigenfunctions are of the form
  \begin{equation*}
    u(x) = c_1 \e^{\sqrt{\lambda}x} + c_2 \e^{-\sqrt{\lambda}x}
  \end{equation*}
  where $c_1$ and $c_2$ are given by the boundary conditions: For $u$
  to be in the domain of $\CL$, the coefficients $c_1$ and $c_2$ need
  to satisfy
  \begin{equation*}
    \begin{pmatrix}
      \alpha_0-\beta_0\sqrt{\lambda} & \alpha_0+\beta_0\sqrt{\lambda} \\
      (\alpha_1+\beta_1\sqrt{\lambda}) \e^{\sqrt{\lambda}} & (\alpha_1-\beta_1\sqrt{\lambda}) \e^{-\sqrt{\lambda}}
    \end{pmatrix}
    \begin{pmatrix}
      c_1 \\ c_2
    \end{pmatrix}
    =
    \begin{pmatrix}
      0 \\ 0
    \end{pmatrix}.
  \end{equation*}
  Non-trivial solutions exist only if the matrix is singular or,
  equivalently, if its determinant
  \begin{equation}\label{E:ev-exists}
    f(\lambda)
    = \alpha_0\alpha_1 + \bigl(\alpha_0\beta_1+\alpha_1\beta_0 \bigr)\,\sqrt{\lambda}\coth\sqrt\lambda + \beta_0\beta_1\lambda
  \end{equation}
  satisfies $f(\lambda) = 0$.  For $\lambda = 0$ the eigenfunctions
  are of the form $u(x) = c_1 1 + c_2 x$ and an argument similar to
  the one above shows that the boundary conditions can be satisfied if
  and only if $\alpha_0\alpha_1 + \alpha_0\beta_1 +\alpha_1\beta_0 =
  0$.  Since $x \coth(x) \to 1$ as $x\to 0$, this condition can be
  written as $f(0) = 0$ where
  \begin{equation*}
    f(0)
    = \lim_{\lambda\downarrow 0} f(\lambda)
    = \alpha_0\alpha_1 + \alpha_0\beta_1+\alpha_1\beta_0.
  \end{equation*}
  This shows that $\CL$ is negative whenever $f(\lambda) \neq 0$ for
  all $\lambda \geq 0$.

  Let $(\alpha_0, \beta_0, \alpha_1, \beta_1)\in A$.  Assume first
  $\beta_i \neq 0$ for $i=1,2$ and let $\xi_i = \alpha_i/\beta_i$.
  Then, by the first condition in $A$,
   we have $\xi_0, \xi_1 > -1$ and $(\xi_0 - 1)(\xi_1 - 1) > 1$, and
   for $\lambda \geq 0$ we get
  \begin{equation*}
    f(\lambda)
    = \xi_0\xi_1 + \bigl(\xi_0+\xi_1 \bigr)\,\sqrt{\lambda}\coth\sqrt\lambda + \lambda
    \geq (\xi_0 + 1) (\xi_1 + 1) - 1
    > 0.
  \end{equation*}
  The cases $\beta_0 = 0$ or $\beta_1 = 0$
  can be treated similarly.  Thus, for $(\alpha_0, \beta_0, \alpha_1,
  \beta_1)\in A$, there are no eigenvalues with $\lambda \geq 0$ and
  the operator is negative.

  For the converse statement, assume that $(\alpha_0, \beta_0,
  \alpha_1, \beta_1)\notin A$.  We then have to show that there is a
  $\lambda\geq0$ with $f(\lambda) = 0$.  Assume first $\beta_i > 0$
  for $i=1,2$ and define $\xi_i$ as above.  If $(\xi_0+1)(\xi_1+1) =
  1$, we have $f(0) = 0$.  If $(\xi_0+1)(\xi_1+1) < 1$, the function
  $f$ satisfies $f(0) < 0$ and $f(\lambda) \to\infty$ as
  $\lambda\to\infty$; by continuity there is a $\lambda>0$ with
  $f(\lambda) = 0$.  Finally, if $(\xi_0+1)(\xi_1+1) > 1$ but $\xi_0,
  \xi_1 \leq -1$, we have $f(0) > 0$ and for $\lambda$ with
  $\sqrt{\lambda} = -(\xi_0+\xi_1)/2 > 0$ we find $f(\lambda) <
  \xi_0\xi_1 + \bigl(\xi_0+\xi_1\bigr)\,\sqrt\lambda + \lambda =
  \xi_0\xi_1 - (\xi_0+\xi_1)^2/4 = -(\xi_0-\xi_1)^2/4 \leq 0$ and by
  continuity there is a $\lambda>0$ with $f(\lambda) = 0$.  Again, the
  cases $\beta_0 = 0$ and $\beta_1 = 0$ can be treated similarly.
\end{proof}

A representation of the eigenvalues of $\CL$ which is similar to the
one in the proof of lemma~\ref{L:cond-neg} can be found in section~3
of~\citet{CaccFin10}.

The statement from lemma~\ref{L:cond-neg} reproduces the well-known
results that the Laplacian with Dirichlet boundary conditions
($\alpha_i = 1, \beta_i = 0$) is negative definite whereas the
Laplacian with von~Neumann boundary conditions ($\alpha_i = 0, \beta_i
= 1$) is not (since constants are eigenfunctions with eigenvalue~$0$).

\begin{lemma}\label{L:C-exact}
  Let $\CL$ be negative definite.  Then the following statements hold:
  \begin{jvlist}
  \item[1.] The linear SPDE~\eqref{E:linear} has global, continuous
    $\CH$-valued solutions.
  \item[2.] Equation~\eqref{E:linear} has a unique stationary
    distribution $\nu$ on $\CH$.  The measure~$\nu$ is Gaussian with
    mean~$0$ and covariance function
    \begin{equation}\label{E:C-exact}
      C(x,y)
      = \frac{\beta_0\beta_1 + \alpha_0 \beta_1 \, xy + \beta_0\alpha_1(1-x)(1-y)}%
               {\alpha_0\alpha_1 + \alpha_0\beta_1 + \beta_0\alpha_1} + x\wedge y - xy
    \end{equation}
    where $x \wedge y$ denotes the minimum of $x$ and~$y$.
  \item[3.] The measure $\nu$ coincides with the distribution of
    $U\in \rC\bigl([0,1],\R\bigr)$ given by
    \begin{equation*}
      U(x) = (1-x)L + xR + B(x) \qquad \forall x\in[0,1],
    \end{equation*}
    where $L\sim\CN(0, \SL^2)$, $R\sim\CN(0,\SR^2)$ with $\Cov(L,R) =
    \SLR$, the process $B$ is a Brownian bridge, independent of $L$
    and~$R$, and
    \begin{multline*}
      \SL^2 = \frac{\beta_0(\alpha_1+\beta_1)}{\alpha_0\alpha_1 + \alpha_0\beta_1 + \beta_0\alpha_1}, \quad
      \SR^2 = \frac{(\alpha_0+\beta_0)\beta_1}{\alpha_0\alpha_1 + \alpha_0\beta_1 + \beta_0\alpha_1}, \\
      \SLR = \frac{\beta_0\beta_1}{\alpha_0\alpha_1 + \alpha_0\beta_1 + \beta_0\alpha_1}.
    \end{multline*}
  \end{jvlist}
\end{lemma}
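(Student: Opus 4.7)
My plan is to address the three parts separately, since they require quite different techniques.

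\textbf{Part~1} follows from the standard linear SPDE framework in \citet{ZDP}. Under the assumption that $\CL$ is self-adjoint and negative definite, a Sturm--Liouville argument shows that the eigenvalues $\lambda_n$ of $-\CL$ grow like $n^2$ in the present one-dimensional setting, so $(-\CL)^{-1}$ is trace-class on $\CH$. This makes the stochastic convolution $\int_0^t e^{(t-s)\CL}\,dw(s)$ a well-defined $\CH$-valued Gaussian process, and a Kolmogorov continuity argument (using the analyticity of the semigroup) yields a version with continuous $\CH$-valued paths; in one spatial dimension the stationary process can even be taken continuous in $x$.

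\textbf{Part~2.} The mild solution of~\eqref{E:linear} is an $\CH$-valued Ornstein--Uhlenbeck process, and by the standard characterisation of its invariant measure (chapter~11 of \citet{ZDP}) the unique stationary distribution is the centred Gaussian with covariance operator
\begin{equation*}
  C = \int_0^\infty 2\,e^{2s\CL}\,ds = (-\CL)^{-1}.
\end{equation*}
Hence $C(x,y)$ is the Green's function of $-\d_x^2$ subject to the boundary conditions in~\eqref{E:domain}. I would pick the two linear solutions of $-u''=0$ matching the left and right boundary conditions,
\begin{equation*}
  \phi_0(x) = \beta_0 + \alpha_0 x, \qquad
  \phi_1(x) = \beta_1 + \alpha_1(1-x),
\end{equation*}
compute their (constant) Wronskian as $-D$ with $D := \alpha_0\alpha_1 + \alpha_0\beta_1 + \alpha_1\beta_0$, and assemble the Green's function in the usual way as $G(x,y) = \phi_0(x\wedge y)\phi_1(x\vee y)/D$. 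Expanding this product and using $x\wedge y - xy = x(1-y)$ for $x \leq y$ rewrites $G$ as the symmetric rational expression in~\eqref{E:C-exact}.

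\textbf{Part~3} is a direct second-moment calculation. Since $U$ is centred Gaussian it suffices to check that its covariance equals $C(x,y)$. Using independence of $B$ from $(L,R)$ and the Brownian-bridge covariance $\E[B(x)B(y)] = x\wedge y - xy$, one gets
\begin{equation*}
  \E[U(x)U(y)]
  = (1-x)(1-y)\,\SL^2 + \bigl[(1-x)y + x(1-y)\bigr]\SLR + xy\,\SR^2 + x\wedge y - xy.
\end{equation*}
Substituting the stated values for $\SL^2,\SR^2,\SLR$ and collecting terms, the rational part collapses onto the first summand of~\eqref{E:C-exact}, while the Brownian-bridge term supplies the remaining $x\wedge y - xy$.

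The step I expect to require the most care is part~2: the identification of the invariant covariance with $(-\CL)^{-1}$ is the conceptual heart of the lemma, and the algebra needed to reconcile $\phi_0(x\wedge y)\phi_1(x\vee y)/D$ with the symmetric form in~\eqref{E:C-exact} is elementary but tedious. Part~1 is then a textbook application of SPDE theory once the spectral information on $\CL$ is in hand, and part~3 reduces to a routine covariance computation.
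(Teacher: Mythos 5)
Your proposal is correct and follows essentially the same route as the paper: identify the stationary measure as $\CN(0,(-\CL)^{-1})$ using standard Ornstein--Uhlenbeck theory, realise $(-\CL)^{-1}$ as the Green's function of $-\d_x^2$ with the boundary conditions~\eqref{E:domain}, and verify part~3 by the direct covariance computation. The only cosmetic difference is that you \emph{construct} the Green's function from the two homogeneous solutions $\phi_0,\phi_1$ and their Wronskian $-D$, whereas the paper simply \emph{verifies} that the stated $C(x,y)$ satisfies $-\d_x^2 C(\cdot,y)=\delta(\cdot-y)$ and the boundary conditions; both calculations check out.
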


\begin{proof}
  From \citet{IscMaMcDoTaZi90} and \citet{DaPraZa96}
  \citep[see][Lemma~2.2]{HaiStuaVoWi05} we know that \eqref{E:linear}
  has global, continuous $\CH$-valued solutions as well as a unique
  stationary distribution given by $\nu = \CN(0, -\CL^{-1})$.  An easy
  computation shows that $C$ as given in equation~\eqref{E:C-exact} is
  a Green's function for the operator $-\CL$, \textit{i.e.}\ $-\d_x^2
  C(x,y) = \delta(x-y)$ and for every $y\in(0,1)$ the function $x
  \mapsto C(x,y)$ satisfies the boundary conditions~\eqref{E:domain}.
  This completes the proof of the first two statements.

  For the third statement we note that $U$ is centred Gaussian with
  covariance function
  \begin{align*}
    C(x,y)
      &= \Cov\bigl(U(x),U(y) \bigr) \\
      &= \Cov\bigl((1-x)L + xR + B(x), (1-y)L + yR + B(y)\bigr) \\
      &= (1-x)(1-y) \,\SL^2 + \bigl((1-x)y+x(1-y)\bigr) \,\SLR + xy \,\SR^2 \\
      &\hskip1cm
          + x\wedge y - xy.
  \end{align*}
  The fact that this covariance function can be written in the
  form~\eqref{E:C-exact} can be checked by a direct calculation.
\end{proof}

Using the results for the linear SPDE~\eqref{E:linear} we can
now study the full SPDE~\eqref{E:SPDE}.  The result is given in the
following lemma.

\begin{lemma}\label{L:density}
  Let $\CL$ be negative definite.  Furthermore, let $f = F'$ where
  $F\in \rC^2(\R, \R)$ is bounded from above with bounded second
  derivative.  Then the following statements hold:
  \begin{jvlist}
  \item[1.] The nonlinear SPDE~\eqref{E:SPDE} has global, continuous
    $\CH$-valued solutions.
  \item[2.] Equation~\eqref{E:SPDE} has a unique stationary
    distribution~$\mu$ which is given by
  \begin{equation*}
    \frac{d\mu}{d\nu}(u)
    = \frac{1}{Z} \exp\Bigl(
        \int_0^1 F\bigl(u(x)\bigr) \,dx
      \Bigr)
  \end{equation*}
  where $\nu$ is the stationary distribution of~\eqref{E:linear} from
  lemma~\ref{L:C-exact} and $Z$ is the normalisation constant.
\end{jvlist}
\end{lemma}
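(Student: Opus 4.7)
The plan is to split the proof along the two assertions of the lemma: global existence for the nonlinear SPDE, and identification together with uniqueness of the invariant measure. The underlying heuristic, which will guide every step, is that \eqref{E:SPDE} is a Langevin equation whose ``energy'' is
\begin{equation*}
  \Psi(u)
  = \tfrac12 \la u, -\CL u\ra_\CH
    - \int_0^1 F\bigl(u(x)\bigr) \,dx,
\end{equation*}
so that the candidate stationary measure $\mu$ is formally the Gibbs measure $Z^{-1}\e^{-\Psi}$. The Gaussian part of this formal density is exactly $\nu$ from lemma~\ref{L:C-exact}, which explains the stated density $d\mu/d\nu$.

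For the first statement, I would observe that the assumption on $F''$ implies that $f=F'$ is globally Lipschitz on $\R$, and therefore acts as a globally Lipschitz Nemytskii operator on $\CH$. Since $\CL$ is self-adjoint and negative definite, it generates an analytic contraction semigroup $\e^{t\CL}$ on $\CH$, and the stochastic convolution $\int_0^t \e^{(t-s)\CL}\,dw(s)$ is a continuous $\CH$-valued process (this is already contained in the cited results of~\citet{DaPraZa96} used for lemma~\ref{L:C-exact}). Standard Picard iteration in $\rC([0,T],\CH)$, of the kind recorded in~\citet{ZDP}, then yields a unique global mild solution with continuous paths.

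For the second statement I would first check that $\mu$ is a well-defined probability measure: since $F$ is bounded from above, the density $\e^{\int_0^1 F(u(x))\,dx}$ is bounded above, so $Z\in(0,\infty)$. To verify that $\mu$ is stationary, my preferred route is the generator calculation. One computes the generator $\CA$ of~\eqref{E:SPDE} on cylindrical test functions built from the eigenbasis of $\CL$, splits it as $\CA = \CA_0 + \la f(u), D\quark\ra$ where $\CA_0$ is the Ornstein--Uhlenbeck generator of~\eqref{E:linear}, and uses the integration by parts formula for the Gaussian reference measure $\nu$ (valid because $-\CL^{-1}$ is trace class in one spatial dimension). The pointwise chain rule $\nabla(\int F(u))=f(u)$ then shows that the contribution of the extra drift $f(u)$ is cancelled exactly by the derivative of the tilting density, so $\int \CA\phi \,d\mu = 0$ for all test functions, which is stationarity. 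An equivalent Girsanov-based argument, comparing the law of~\eqref{E:SPDE} on path space with that of~\eqref{E:linear}, is available as a back-up.

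Uniqueness I would obtain from standard ergodicity: the cylindrical noise makes the semigroup of~\eqref{E:SPDE} strong Feller and irreducible on $\CH$, so Doob's theorem applies. The main obstacle is the infinite-dimensional integration by parts used to verify invariance; this has to be justified by first working on finite-dimensional Galerkin projections (where the Fokker--Planck computation is completely elementary) and then passing to the limit using the trace-class property of $-\CL^{-1}$ and the Lipschitz bound on $f$ to control the error terms uniformly in the projection level. Once this limiting argument is in place, all other ingredients are routine.
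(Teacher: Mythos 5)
The paper does not actually prove this lemma: it simply cites \citet{Za89} and corollary~4.5 of \citet{HaiStuaVo07}, where the result is established. Your sketch is, in essence, an accurate reconstruction of the argument contained in those references, so it is not wrong so much as it re-derives what the paper delegates to the literature. The individual ingredients are all sound: boundedness of $F''$ does make $f=F'$ globally Lipschitz, so the Nemytskii operator is Lipschitz on $\CH$ and a fixed-point argument in $\rC([0,T],\CH)$ gives global mild solutions; the Gibbs heuristic with energy $\tfrac12\la u,-\CL u\ra - \int_0^1 F(u(x))\,dx$ correctly accounts for the $\sqrt2$ noise intensity and reproduces the stated density $\e^{\int_0^1 F(u(x))\,dx}/Z$ with no spurious factor of $2$ in the exponent (compare lemma~\ref{L:preconditioning}, where the noise carries no $\sqrt{2}$ and the density is $\e^{2F}$); boundedness of $F$ from above gives $Z<\infty$, and $Z>0$ is clear; and strong Feller plus irreducibility plus Doob's theorem is the standard uniqueness route for nondegenerate additive noise with Lipschitz drift. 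You also correctly identify the one genuinely delicate step, namely justifying the infinite-dimensional integration by parts via Galerkin projections and a limiting argument, though your proposal leaves that step at the level of a plan rather than carrying it out; since this is precisely the technical content of the cited works, that is a reasonable place to stop, but a self-contained proof would need to supply the uniform-in-projection error estimates you allude to.
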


\begin{proof}
  This result is well known, see \textit{e.g.}\ \citet{Za89} or
  \citet[corollary~4.5]{HaiStuaVo07}.
\end{proof}

\section{Finite Element Approximation}
\label{S:SDE}

In this section we consider finite dimensional approximations of the
SPDE~\eqref{E:SPDE}, obtained by discretising space using the finite
element method.  The approximation follows the
same approach as for deterministic PDEs.  For background on the
deterministic case we refer to \cite{BreSco02} or~\cite{Jo90}.

\smallskip

To discretise space, let $n\in\N$, $\DX = 1/n$ and consider $x$-values
on the grid $k\DX$ for $k\in\N$.  Since the differential operator
$\CL$ in~\eqref{E:SPDE} is a second order differential operator, we
can choose a finite element basis consisting of ``hat functions''
$\phi_i$ for $i\in\Z$ which have $\phi_i(i \,\DX)=1$, $\phi_i(j \,\DX)
= 0$ for all $j\neq i$, and which are affine between the grid points.
Formally, the weak (in the PDE-sense)
formulation of SPDE~\eqref{E:SPDE} can be written as
\begin{equation*}
   \la v, du(t) \ra = B(v, u) \,dt + \la v, f\bigl(u(t)\bigr) \ra + \sqrt{2} \la v, dw(t)\ra
\end{equation*}
where $\la \,\cdot\,, \,\cdot\,\ra$ denotes the $\rL^2$-inner product
and the bilinear form~$B$ is given by
\begin{equation*}
  B(u, v)
  = \la v, \CL u \ra
  = u(1) v'(1) - u(0) v'(0) - \int_0^1 u'(x) v'(x) \,dx.
\end{equation*}
The discretised solution is found by taking $u$ and~$v$ to be in the
space spanned by the functions~$\phi_i$, \textit{i.e.}\ by using the ansatz
\begin{equation*}
  u(t) = \sum_j U_j(t) \phi_j
\end{equation*}
and then considering the following system of equations:
\begin{equation}\label{E:FE-SDE0}
  \la \phi_i, \sum_j dU_j \phi_j\ra
  = \la \phi_i, \d_x^2 \sum_j U_j \phi_j\ra \,dt
    + \la \phi_i, f\bigl(\sum_j U_j \phi_j\bigr) \ra \,dt
    + \sqrt2 \la \phi_i, dw\ra.
\end{equation}

The domain $V$ of the bilinear form $B$ depends on the boundary
conditions of~$\CL$; there are four different cases:
\begin{enumerate}
\item If $\beta_0, \beta_1 \neq 0$ in~\eqref{E:domain}, \textit{i.e.}\
  for von~Neumann or Robin boundary conditions, we have $V =
  \rH^1\bigl([0,1], \R\bigr)$ and we consider the basis functions
  $\phi_i$ for $i\in I = \{ 0, 1, \ldots, n-1, n \}$.
\item If $\beta_0 = 0$ and $\beta_1 \neq 0$, \textit{i.e.}\ for a
  Dirichlet boundary condition at the left boundary, we have $V =
  \bigl\{ u \in \rH^1\bigl([0,1], \R\bigr) \bigm| u(0)=0 \bigr\}$ and we
  consider the basis functions $\phi_i$ for $i\in I = \{ 1, \ldots,
  n-1, n \}$.
\item If $\beta_0 \neq 0$ and $\beta_1 = 0$, \textit{i.e.}\ for a
  Dirichlet boundary condition at the right boundary, we have $V =
  \bigl\{ u \in \rH^1\bigl([0,1], \R\bigr) \bigm| u(1)=0 \bigr\}$ and we
  consider the basis functions $\phi_i$ for $i\in I = \{ 0, 1, \ldots,
  n-1 \}$.
\item If $\beta_0, \beta_1 = 0$, \textit{i.e.}\ for Dirichlet boundary
  conditions at both boundaries, we have $V = \bigl\{ u \in
  \rH^1\bigl([0,1], \R\bigr) \bigm| u(0) = u(1) = 0 \bigr\}$ and we
  consider the basis functions $\phi_i$ for $i\in I = \{ 1, \ldots,
  n-1 \}$.
\end{enumerate}
Throughout the rest of the text we will write $I$ for the index set of
the finite element discretisation as above, and the discretised
solution $u = \sum_{j\in I} U_j \phi_j$ will be described by the
coefficient vector $U\in\R^I$.  In all cases we define the ``stiffness
matrix'' $L^{(n)} \in \R^{I\times I}$ by $L^{(n)}_{ij} = B(\phi_i,
\phi_j)$ for all $i,j \in I$.  For the given basis functions we get
\begin{equation*}
  L^{(n)}_{ij} =
  \begin{cases}
    - \frac{2}{\DX} & \mbox{if $i=j \notin\{0, n\}$,} \\
    + \frac{1}{\DX} & \mbox{if $i\in\{j-1, j+1\}$,} \\
    - \frac{1}{\DX} - \frac{\alpha_0}{\beta_0}& \mbox{if $i=j=0$,} \\
    - \frac{1}{\DX} - \frac{\alpha_1}{\beta_1}& \mbox{if $i=j=n$,} \\
    0 & \mbox{else,}
  \end{cases}
\end{equation*}
where the cases $i=j=0$ and $i=j=n$ cannot occur for Dirichlet
boundary conditions.  The ``mass matrix'' $M\in\R^{I\times I}$ is
defined by $M_{ij} = \la \phi_i, \phi_j\ra$ and for $i,j \in I$ we get
\begin{equation*}
  M_{ij} =
  \begin{cases}
    \frac{4}{6} \DX & \mbox{if $i=j \notin\{0, n\}$,} \\
    \frac{1}{6} \DX & \mbox{if $i\in\{j-1, j+1\}$,} \\
    \frac{2}{6} \DX & \mbox{if $i=j \in\{0, n\}$,} \\
    0 & \mbox{else,}
  \end{cases}
\end{equation*}
where, again, the cases $i=j=0$ and $i=j=n$ don't occur for Dirichlet
boundary conditions.  We note that the matrix~$L^{(n)}$ only has
the prefactor $1/\DX$ instead of the $1/\DX^2$ one would expect for a
second derivative.  The ``missing'' $\DX$ appears in the matrix~$M$.

Since
\begin{equation*}
  \Cov(\la\phi_i, w\ra, \la\phi_j, w\ra) = \la\phi_i, \phi_j\ra = M_{ij},
\end{equation*}
equation~\eqref{E:FE-SDE0} can be written as
\begin{equation*}
  M \, dU_t
  = L^{(n)} U_t \,dt + f_n(U_t) \,dt + \sqrt{2} M^{1/2} dW_t
\end{equation*}
where $f_n\colon \R^I\to \R^I$ is defined by
\begin{equation}\label{E:FE-f}
  f_n(u)_i = \bigl\la \phi_i, f\bigl(\sum_{j\in I} u_j \phi_j\bigr) \bigr\ra
\end{equation}
for all $u\in\R^I$ and $i\in I$.  Multiplication with $M^{-1}$ then
yields the following SDE describing the evolution of the
coefficients~$(U_i)_{i\in I}$:

\begin{definition}
  The {\em finite element discretisation} of SPDE~\eqref{E:SPDE} is given by
  \begin{equation}\label{E:FE-SDE}
    dU_t
    = M^{-1} L^{(n)} U_t \,dt
      + M^{-1} f_n(U_t) \,dt
      + \sqrt{2} M^{-1/2}\, dW_t
  \end{equation}
  where $W$ is an $|I|$-dimensional standard Brownian motion, $I\subseteq \{0, 1, \ldots, n\}$
  is the index set of the finite element discretisation, and
  $L^{(n)}$ and $M$ are as above.
\end{definition}

Our aim is to show that the stationary distribution of~\eqref{E:FE-SDE}
converges to the stationary distribution of the SPDE~\eqref{E:SPDE}.
We start our analysis by considering the linear
case $f\equiv 0$.  For this case the finite element discretisation
simplifies to~\eqref{E:FE-linear} below.

\begin{lemma}\label{L:FE-linear}
  Let $\CL$ be negative definite.  Then $\nu_n = \CN\bigl(0,
  (-L^{(n)})^{-1}\bigr)$ is the unique stationary distribution of
  \begin{equation}\label{E:FE-linear}
    dU_t
    = M^{-1} L^{(n)} U_t \,dt
      + \sqrt{2} M^{-1/2}\, dW_t.
  \end{equation}
\end{lemma}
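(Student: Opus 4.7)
The plan is to treat~\eqref{E:FE-linear} as an Ornstein--Uhlenbeck process on $\R^I$ with drift matrix $A = M^{-1} L^{(n)}$ and diffusion $\Sigma = \sqrt{2}\, M^{-1/2}$, and then invoke the standard Lyapunov-equation characterisation of its Gaussian stationary distribution. The first step is to record the relevant algebraic facts about the two matrices: $M$ is symmetric positive definite (it is the Gram matrix of the linearly independent basis $(\phi_i)_{i\in I}$ in $\rL^2$), and $L^{(n)}$ is symmetric because $L^{(n)}_{ij} = B(\phi_i, \phi_j) = B(\phi_j, \phi_i)$; one sees the symmetry either from the entry formula in the excerpt or from the identity $B(u,v) = -\int_0^1 u'v'\,dx + \text{boundary terms}$, which is symmetric in $u,v$ once the boundary conditions have been absorbed.

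Next I would argue that $L^{(n)}$ is negative definite, as a Galerkin restriction of the negative definite operator $\CL$. Indeed, for any nonzero $U\in\R^I$ the function $u = \sum_{j\in I} U_j \phi_j$ lies in $\CD(\CL)\cap V$ (after the appropriate boundary absorbtion), and
\begin{equation*}
  U^\top L^{(n)} U
  = \sum_{i,j\in I} U_i U_j B(\phi_i, \phi_j)
  = B(u,u)
  = \la u, \CL u\ra < 0
\end{equation*}
by lemma~\ref{L:cond-neg}. It follows that $M^{-1} L^{(n)}$, which is similar to the symmetric negative definite matrix $M^{-1/2} L^{(n)} M^{-1/2}$, has only strictly negative eigenvalues, so the SDE~\eqref{E:FE-linear} is a stable linear diffusion and admits a unique invariant measure, necessarily a centred Gaussian $\CN(0, C_n)$.

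To identify $C_n$, I would write down the Lyapunov equation $A C_n + C_n A^\top + \Sigma \Sigma^\top = 0$ characterising the stationary covariance, which here becomes
\begin{equation*}
  M^{-1} L^{(n)} C_n + C_n L^{(n)} M^{-1} + 2 M^{-1} = 0.
\end{equation*}
Plugging in the candidate $C_n = (-L^{(n)})^{-1}$ and using the symmetry of $L^{(n)}$ gives $-M^{-1} - M^{-1} + 2M^{-1} = 0$, confirming the identity. Since the Lyapunov equation has a unique solution when $A$ is stable, $C_n = (-L^{(n)})^{-1}$ is the only possibility, completing the proof.

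The only mildly subtle point is the verification that $L^{(n)}$ inherits negative definiteness from $\CL$ (including the Robin boundary-condition cases, where the boundary terms from $B$ interact with $\alpha_i/\beta_i$); apart from this all steps are routine linear algebra for Gaussian OU processes.
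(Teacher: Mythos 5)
Your proof is correct and follows essentially the same route as the paper: establish that $M^{-1}L^{(n)}$ is stable via its similarity to the symmetric negative definite matrix $M^{-1/2}L^{(n)}M^{-1/2}$, then verify that $(-L^{(n)})^{-1}$ solves the uniquely solvable Lyapunov equation $M^{-1}L^{(n)}C_n + C_nL^{(n)}M^{-1} = -2M^{-1}$. One small imprecision worth fixing: the hat functions are piecewise linear and so lie in the form domain $V\subseteq\rH^1$ but not in $\CD(\CL)\subseteq\rH^2$, so the negative definiteness of $L^{(n)}$ should be deduced from $B(u,u)<0$ for $u\in V$ (the quadratic form associated with $\CL$) rather than from $\la u,\CL u\ra$ directly.
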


\begin{proof}
  Since $\CL$ is a negative operator, the matrix $L^{(n)}$ is a
  symmetric, negative definite matrix.  As the product of a positive
  definite symmetric matrix and a negative definite symmetric matrix,
  $M^{-1}L^{(n)}$ is negative definite; its eigenvalues coincide with
  the eigenvalues of
  \begin{equation*}
    M^{-1/2} L^{(n)} M^{-1/2}
    = - \bigl((-L^{(n)})^{1/2}M^{-1/2}\bigr)^\top \, \bigl((-L^{(n)})^{1/2} M^{-1/2}\bigr).
  \end{equation*}
  From \citet[theorem~8.2.12]{Arn74} we know that
  then the unique stationary distribution of the
  SDE~\eqref{E:FE-linear} is~$\CN(0,C^{(n)})$ where $C^{(n)}$ solves the
  Lyapunov equation
  \begin{equation*}
    M^{-1}L^{(n)} C^{(n)} + C^{(n)} L^{(n)} M^{-1} = - 2 M^{-1}.
  \end{equation*}
  By theorem~5.2.2 of~\citet{Lancaster-Rodman95}, this system of
  linear equations has a unique solution and it is easily verified
  that this solution is given by $C^{(n)} = (-L^{(n)})^{-1}$.
\end{proof}

The following lemma shows that for $f\equiv 0$ there is no
discretisation error at all: the stationary distributions of the SPDE
\eqref{E:linear}, projected to~$\R^I$, and of the finite element
discretisation~\eqref{E:FE-SDE} coincide.

\begin{lemma}\label{L:exact}
  Define $\Pi\colon \rC\bigl([0,1],\R\bigr)\to \R^I$ by
  \begin{equation}\label{E:Pi}
    (\Pi u)_i = u (i\DX) \qquad \forall i\in I.
  \end{equation}
  Let $\nu$ be the stationary distribution of the linear
  SPDE~\eqref{E:linear} on $\rC\bigl([0,1],\R\bigr)$ and let $\nu_n$ be
  the stationary distribution of the linear finite element
  discretisation~\eqref{E:FE-linear}.  Then we have
  \begin{equation*}
    \nu_n = \nu\circ\Pi^{-1}
  \end{equation*}
  for every $n\in\N$.
\end{lemma}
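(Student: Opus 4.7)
The plan is to reduce the claim to a covariance identity and then exploit the special feature of one-dimensional piecewise-linear elements that the Green's function of $-\d_x^2$ sits exactly inside the finite-element space, so that no discretisation error at all is committed.

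By Lemma~\ref{L:C-exact} and Lemma~\ref{L:FE-linear}, both $\nu\circ\Pi^{-1}$ and $\nu_n$ are centred Gaussian measures on $\R^I$, so the statement reduces to the matrix identity
\begin{equation*}
  \bigl[(-L^{(n)})^{-1}\bigr]_{ij} = C(i\DX, j\DX) \qquad \forall\, i,j \in I,
\end{equation*}
where $C$ is the kernel from~\eqref{E:C-exact}. For each $j \in I$, set $G_j(x) := C(x, j\DX)$. From the explicit formula~\eqref{E:C-exact} one reads off that $G_j$ is continuous, piecewise affine in~$x$, with its only kink at the grid point $x = j\DX$; in the Dirichlet cases $G_j$ additionally vanishes at the omitted boundary node. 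Consequently $G_j$ lies in the finite-element space, and the nodal-basis property $\phi_i(k\DX)=\delta_{ik}$ yields the exact expansion $G_j = \sum_{i\in I} C(i\DX,j\DX)\,\phi_i$.

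Applying $B(\phi_k,\,\cdot\,)$ to this expansion and using bilinearity, the algebraic side becomes $\sum_{i\in I} L^{(n)}_{ki}\, C(i\DX,j\DX) = (L^{(n)} v^j)_k$, where $v^j_i := C(i\DX, j\DX)$. To evaluate $B(\phi_k, G_j)$ directly, I plan to use the Green-function identity $-\d_x^2 G_j = \delta(\cdot - j\DX)$ together with the boundary conditions~\eqref{E:domain} satisfied by $G_j$: integration by parts on the two subintervals $[0,j\DX]$ and $[j\DX,1]$ produces the contribution $-\phi_k(j\DX) = -\delta_{kj}$ from the jump of $\d_x G_j$ at $j\DX$, while the boundary terms at $0$ and $1$ reproduce exactly the Robin entries $-\alpha_i/\beta_i$ appearing on the diagonal of $L^{(n)}$ (and vanish in the Dirichlet cases). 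Combining the two computations gives $L^{(n)} v^j = -e^j$, which is the desired identity.

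The only delicate step is the bookkeeping in that last computation: one has to verify that the boundary contributions coming from the Robin condition on $G_j$ line up precisely with the endpoint modifications $-\alpha_i/\beta_i$ built into $L^{(n)}$, and that the four boundary-condition cases can be handled in a uniform way. Everything else is routine linear algebra and a reading-off from the explicit formula~\eqref{E:C-exact}.
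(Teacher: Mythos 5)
Your proposal is correct, and it reaches the crux of the matter by the same reduction as the paper: both proofs observe that $\nu\circ\Pi^{-1}$ and $\nu_n$ are centred Gaussians and that it therefore suffices to prove the matrix identity $C\exact L^{(n)} = -I$, where $C\exact_{ij} = C(i\DX,j\DX)$. Where you differ is in how that identity is established. The paper simply asserts that it follows from ``a simple calculation'' using the explicit entries of $C\exact$ and $L^{(n)}$, checked separately in the four boundary-condition cases. You instead give a structural argument: the column function $G_j(\quark) = C(\quark, j\DX)$ is continuous and piecewise affine with its only kink at the grid node $j\DX$ (and vanishes at any omitted Dirichlet node), hence lies exactly in the finite element space with nodal expansion $G_j = \sum_i C(i\DX,j\DX)\phi_i$; then $(L^{(n)}v^j)_k = B(\phi_k,G_j) = -\delta_{kj}$ follows from integration by parts, the unit jump of $\d_x G_j$ at $j\DX$, and the cancellation of the boundary terms against the Robin entries $-\alpha_i/\beta_i$ of $L^{(n)}$ via the boundary conditions satisfied by $G_j$. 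I have checked this bookkeeping and it does work out, including the Dirichlet cases where $\phi_k$ vanishes at the omitted endpoints. Your route buys a uniform treatment of all four cases and an explanation of \emph{why} the discretisation is exact (the Green's function of $-\d_x^2$ is itself piecewise linear on the mesh), at the cost of having to be careful that the form of $B$ used in the direct evaluation is the same Robin-substituted symmetric form that defines $L^{(n)}$; the paper's route is more elementary but is pure entry-by-entry verification.
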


\begin{proof}
  Let $C\exact$ be the covariance matrix of $\nu\circ\Pi^{-1}$ and let
  $C^{(n)}$ be the covariance matrix of~$\nu_n$.  Since both measures
  under consideration are centred Gaussian, if suffices to show
  $C\exact = C^{(n)}$.
  By lemma~\ref{L:C-exact}, the matrix
  $C\exact$ satisfies
  \begin{equation*}
    C\exact_{i,j} = C(i\DX, j\DX) \qquad \forall i,j \in I
  \end{equation*}
  where $C$ is given by equation~\eqref{E:C-exact}.  By
  lemma~\ref{L:FE-linear} we have $C^{(n)} = (-L^{(n)})^{-1}$.  A
  simple calculation, using the fact that both $C\exact$ and $L^{(n)}$
  are known explicitly, shows $C\exact L^{(n)} = -I$ and thus
  $C\exact=C^{(n)}$ (the four different cases for the boundary
  conditions need to be checked separately).  This completes the
  proof.
\end{proof}

The preceding results only consider the linear case and for the
general case, in the presence of the non-linearity~$f$, we can of
course no longer expect a similar result to hold.  As a starting point
for analysing this case, we reproduce a well-known result which allows
to identify the stationary distribution of the discretised finite
element equation.

\begin{lemma}\label{L:preconditioning}
  Let $F\in \rC^2\bigl(\R^d, \R)$ with bounded second derivatives and
  satisfying the condition $Z = \int_{\R^d} \e^{2 F(x)} \,dx <
  \infty$.  Furthermore, let $A\in\R^{d\times d}$ be invertible.  Then
  the SDE
  \begin{equation}\label{E:grad-log-phi-SDE}
    dX_t = A A^\top \nabla F(X_t) \,dt + A \,dW_t
  \end{equation}
  has a unique stationary distribution which has density
  \begin{equation*}
    \phi(x) = \frac1Z \e^{2F(x)}
  \end{equation*}
  with respect to the Lebesgue measure on~$\R^d$.
\end{lemma}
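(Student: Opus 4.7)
The plan is to reduce to the canonical overdamped Langevin case by a linear change of variables and then verify the invariant density directly. Concretely, set $Y_t = A^{-1} X_t$. Since $A$ is a constant invertible matrix, a one-line computation gives
\begin{equation*}
  dY_t
  = A^{-1} A A^\top \nabla F(X_t)\,dt + A^{-1} A \,dW_t
  = \nabla G(Y_t)\,dt + dW_t,
\end{equation*}
where $G(y) = F(Ay)$, so that $\nabla G(y) = A^\top \nabla F(Ay)$. Any invariant law of $Y$ then pushes forward, under $y \mapsto Ay$, to an invariant law of $X$, so it suffices to analyse the gradient SDE.

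For this gradient SDE the candidate density is $\psi(y) = \tilde Z^{-1} \e^{2G(y)}$ with $\tilde Z = \int \e^{2G(y)}\,dy$. Substituting into the stationary Fokker--Planck equation
\begin{equation*}
  0 = -\nabla\cdot\bigl(\psi\,\nabla G\bigr) + \tfrac{1}{2}\Delta\psi
\end{equation*}
and using the elementary identity $\nabla\psi = 2\psi\,\nabla G$ (whence $\tfrac{1}{2}\Delta\psi = \nabla\cdot(\psi\,\nabla G)$) immediately shows that $\psi$ is invariant. Pushing $\psi$ forward by $A$ and inserting the Jacobian $|\det A|^{-1}$, together with the relation $Z = |\det A|\,\tilde Z$ coming from the same change of variables, yields exactly the claimed density $\phi(x) = Z^{-1} \e^{2F(x)}$, so existence of a stationary distribution with this density is settled.

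For uniqueness I would appeal to standard ergodic theory for SDEs. The invertibility of $A$ makes $AA^\top$ uniformly positive definite, and the boundedness of the second derivatives of~$F$ implies that the drift $AA^\top \nabla F$ has at most linear growth, so~\eqref{E:grad-log-phi-SDE} admits global strong solutions and is uniformly elliptic with smooth coefficients. Uniform ellipticity provides the strong Feller property and irreducibility on $\R^d$, and Doob's theorem then forces any two invariant probability measures to coincide; hence the density $\phi$ identified above is the unique stationary distribution.

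The only genuinely non-trivial step is this uniqueness argument, since existence reduces to a one-line Fokker--Planck verification once the linear change of coordinates has diagonalised the noise. Because the lemma is only used in the sequel as a tool to identify the invariant law of the finite element SDE~\eqref{E:FE-SDE}, it is likely sufficient in practice to quote a standard reference (for example \citet{Arn74}, already cited in the proof of lemma~\ref{L:FE-linear}) for the uniqueness part rather than to reproduce the ergodicity argument in detail.
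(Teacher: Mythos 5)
Your proposal is correct and follows essentially the same route as the paper: the linear change of variables $Y_t = A^{-1}X_t$ reducing \eqref{E:grad-log-phi-SDE} to the gradient SDE $dY_t = \nabla G(Y_t)\,dt + dW_t$ with $G(y)=F(Ay)$, followed by pushing the invariant density forward under $A$. The only difference is that the paper simply quotes existence and uniqueness of the stationary density $\propto \e^{2G}$ for the gradient SDE as a known fact, whereas you verify the Fokker--Planck identity and sketch the ellipticity/Doob argument explicitly; both are fine.
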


\begin{proof}
  Define $G(y) = F(Ay)$ for all $y\in \R^d$.  By the assumptions on
  $F$ we have $G\in \rC^2\bigl(\R^d, \R)$ with bounded second
  derivatives and $Z_G = \int_{\R^d} \e^{2 G(y)} \,dy < \infty$.
  Therefore, the SDE
  \begin{equation*}
    dY_t = \nabla G(Y_t) \,dt + dW_t
  \end{equation*}
  has a unique stationary distribution with density
  \begin{equation*}
    \psi(y) = \frac{1}{Z_G} \e^{2G(y)}.
  \end{equation*}
  Since $\nabla G(y) = A^\top \nabla F(A y)$, we have
  \begin{equation*}
    dY_t = A^\top \nabla F(A Y_t) \,dt + dW_t
  \end{equation*}
  and multiplying this equation by $A$ gives
  \begin{equation*}
    d(AY_t) = A A^\top \nabla F(A Y_t) \,dt + A\,dW_t.
  \end{equation*}

  Consequently, $X_t = AY_t$ satisfies the
  SDE~\eqref{E:grad-log-phi-SDE} and has a unique stationary
  distribution with density proportional to $\psi(A^{-1}x) \propto
  \e^{2G(A^{-1}x)} = \e^{2F(x)}$.  Since this function, up to a
  multiplicative constant, coincides with $\phi$, the process $X$ has
  stationary density~$\phi$.
\end{proof}

Because the stationary distribution in the lemma does not depend
on~$A$, the stationary distribution of~\eqref{E:grad-log-phi-SDE} does
not change when we remove/add $A$ from the equation.  The process of
introducing the matrix $A$ is sometimes called ``preconditioning the
SDE''.

In cases were we are only interested in the stationary distribution of
a discretised SPDE, the argument from lemma~\ref{L:preconditioning}
allows us to omit the mass matrix~$M$ from the finite element
SDE~\eqref{E:FE-SDE}.  In particular we don't need to consider the
potentially computationally expensive square root~$M^{1/2}$ in
numerical simulations.

\begin{lemma}\label{L:FE-density}
  Let $\CL$ be negative definite.  Furthermore, let $f = F'$ where
  $F\in \rC^2(\R, \R)$ is bounded from above with bounded second
  derivative.  Then the finite element SDE~\eqref{E:FE-SDE} has a
  unique stationary distribution~$\mu_n$ given by
  \begin{equation}\label{E:mu-n-dens}
    \frac{d\mu_n}{d\nu_n} = \frac{1}{Z_n} \exp\bigl( F_n \bigr)
  \end{equation}
  where
  \begin{equation*}
    F_n(u) = \int_0^1 F\Bigl(\sum_{j\in I} u_j \phi_j(t)\Bigr) \,dt
      \qquad \forall u\in\R^I,
  \end{equation*}
  $Z_n$ is the normalisation constant and $\nu_n$ is the stationary
  distribution of the linear equation from lemma~\ref{L:FE-linear}.
\end{lemma}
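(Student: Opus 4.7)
The plan is to put equation~\eqref{E:FE-SDE} into the preconditioned form of lemma~\ref{L:preconditioning} with the invertible matrix $A = \sqrt{2}\,M^{-1/2}$ (recall $M$ is symmetric positive definite), which gives $AA^\top = 2M^{-1}$.  One then needs a potential $G\colon\R^I\to\R$ with $2\nabla G(u) = L^{(n)} u + f_n(u)$.  The natural choice is
\begin{equation*}
  G(u) = \tfrac14\, u^\top L^{(n)} u + \tfrac12\, F_n(u),
\end{equation*}
which works because $L^{(n)}$ is symmetric, and because differentiating $F_n$ under the integral, combined with $f = F'$ and the definition~\eqref{E:FE-f} of $f_n$, yields $\nabla F_n = f_n$.

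Next I would verify the hypotheses of lemma~\ref{L:preconditioning} for this $G$.  The Hessian of $G$ equals the constant matrix $\tfrac12 L^{(n)}$ plus a term whose entries are integrals of $F''$ against products of hat functions; since $\|F''\|_\infty < \infty$, this Hessian is bounded, so $G\in \rC^2(\R^I,\R)$ with bounded second derivatives.  For integrability of $\e^{2G}$, note that $L^{(n)}$ is negative definite by lemma~\ref{L:FE-linear}, so the quadratic exponential is integrable; $F_n$ is bounded from above because $F$ is, so $\int_{\R^I}\e^{2G(u)}\,du < \infty$.

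Applying lemma~\ref{L:preconditioning} then produces the unique stationary distribution of~\eqref{E:FE-SDE}, with Lebesgue density proportional to $\exp(2G(u)) = \exp\bigl(\tfrac12 u^\top L^{(n)} u + F_n(u)\bigr)$.  Since $\nu_n = \CN(0,(-L^{(n)})^{-1})$ has Lebesgue density proportional to $\exp(\tfrac12 u^\top L^{(n)} u)$, forming the ratio collapses the quadratic factor and absorbs the remaining constants into $Z_n$, giving~\eqref{E:mu-n-dens}.  The only delicate step is the bookkeeping of the factors $\tfrac12$ and $2$ in the choice of $G$ so that the drift and the resulting invariant density come out consistent; there is no real obstacle beyond that.
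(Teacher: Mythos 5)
Your proposal is correct and follows essentially the same route as the paper: the paper defines $\Phi(u) = \frac12 u^\top L^{(n)} u + F_n(u)$ (which is exactly your $2G$), checks $\nabla\Phi = L^{(n)}u + f_n(u)$, rewrites~\eqref{E:FE-SDE} as $dU_t = M^{-1}\nabla\Phi(U_t)\,dt + \sqrt{2}M^{-1/2}\,dW_t$, applies lemma~\ref{L:preconditioning}, and divides by the Gaussian density of $\nu_n$ to cancel the quadratic term. Your explicit verification of the hypotheses of lemma~\ref{L:preconditioning} (bounded Hessian, integrability of $\e^{2G}$) is a detail the paper leaves implicit, but the argument is otherwise identical up to the factor-of-two bookkeeping you note.
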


\begin{proof}
  Let $\Phi(u) = \frac12 u^\top L^{(n)} u + F_n(u)$ for all $u\in\R^I$.
  Then
  \begin{equation*}
    \d_i \Phi(u)
    = (L^{(n)}u)_i + \bigl\la \phi_i, F'\bigl(\sum_{j\in I} u_j \phi_j(t) \bigr) \bigr\ra
    = \bigl( L^{(n)}u + f_n(u) \bigr)_i
  \end{equation*}
  for all $i\in I$ and thus \eqref{E:FE-SDE} can be written as
  \begin{equation*}
    dU_t = M^{-1} \nabla\Phi(U_t) \,dt + \sqrt{2} M^{-1/2} \, dW_t.
  \end{equation*}
  By lemma~\ref{L:preconditioning}, this SDE has a unique stationary
  distribution~$\mu_n$ whose density w.r.t.\ the $|I|$-dimensional
  Lebesgue measure $\lambda$ is given by
  \begin{equation*}
    \frac{d\mu_n}{d\lambda}(u)
    = \frac{1}{\tilde Z_n} \e^{\Phi(u)}
    = \frac{1}{\tilde Z_n} \exp\Bigl(- \frac12 u^\top (-L^{(n)}) u + F_n(u) \Bigr).
  \end{equation*}

  From lemma~\ref{L:FE-linear} we know that the density of $\nu_n$
  w.r.t.~$\lambda$ is
  \begin{equation*}
    \frac{d\nu_n}{d\lambda}(x)
    = \frac{1}{\bigl(2\pi\bigr)^{|I| / 2} \bigl(\det(-L^{(n)})\bigr)^\frac12}
        \exp\Bigl(-\frac12 x^{\mathrm{T}} (-L^{(n)}) x\Bigr)
  \end{equation*}
  and consequently the distribution $\mu_n$ satisfies
  \begin{equation*}
    \frac{d\mu_n}{d\nu_n}(x)
    = \frac{d\mu_n}{d\lambda}(x) / \frac{d\nu_n}{d\lambda}(x)
    \propto \exp\bigl( F_n \bigr).
  \end{equation*}
  Since the right-hand side, up to constants, coincides with the
  expression in~\eqref{E:mu-n-dens}, the proof is complete.
\end{proof}

\section{Main Result}
\label{S:result}

Now we have identified the stationary distribution of the SPDE
(in section~\ref{S:SPDE}) and of the SDE (in section~\ref{S:SDE}),
we can compare the two stationary distributions.
The result is given in the following theorem.

\begin{theorem}\label{T:result}
  Let $\mu$ be the stationary distribution of the SPDE~\eqref{E:SPDE}
  on $\rC\bigl([0,1],\R\bigr)$.  Let $\mu_n$ be the stationary
  distribution of the finite element equation~\eqref{E:FE-SDE}
  on~$\R^I$.  Let $\CL$ be negative and assume $f=F'$ where $F\in
  \rC^2(\R)$ is bounded from above with bounded second derivative.
  Then
  \begin{equation*}
    \bigl\| \mu\circ\Pi^{-1} - \mu_n \bigr\|_{\mathrm{TV}} = \CO\bigl(\frac1n\bigr)
  \end{equation*}
  as $n\to\infty$, where $\| \quark \|_{\mathrm{TV}}$ denotes
  total-variation distance between probability distributions on~$\R^I$.
\end{theorem}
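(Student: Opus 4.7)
By Lemma~\ref{L:exact} we have $\nu_n = \nu\circ\Pi^{-1}$, so both $\mu\circ\Pi^{-1}$ and $\mu_n$ are absolutely continuous with respect to the common reference measure $\nu_n$ on $\R^I$. From Lemma~\ref{L:FE-density}, $d\mu_n/d\nu_n(U) = Z_n^{-1}\e^{F_n(U)}$ with $F_n(U) = \int_0^1 F(\bar u(x))\,dx$ and $\bar u(x) = \sum_{j\in I} U_j \phi_j(x)$ the piecewise-linear interpolant of the nodal values $U$. From Lemma~\ref{L:density} together with a disintegration with respect to $\Pi$, the density of $\mu\circ\Pi^{-1}$ is
\begin{equation*}
  \frac{d(\mu\circ\Pi^{-1})}{d\nu_n}(U)
  = \frac{1}{Z}\,\E_\nu\Bigl[\exp\Bigl(\int_0^1 F(u(x))\,dx\Bigr) \Bigm| \Pi u = U\Bigr]
  =: \frac{\bar h(U)}{Z},
\end{equation*}
with $Z = \int \bar h\,d\nu_n$. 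Since the total-variation distance is half the $L^1(\nu_n)$-distance of the two densities, the plan is to show $\bar h(U) = \e^{F_n(U)}\bigl(1 + r_n(U)\bigr)$ with $\|r_n\|_{L^1(\mu_n)} = \CO(1/n)$; this automatically forces $Z/Z_n = 1 + \CO(1/n)$ and yields the claimed bound after absorbing both errors.

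Under $\nu$, conditional on $\Pi u = U$, the representation from Lemma~\ref{L:C-exact} combined with the Markov property of the Brownian bridge shows that $u = \bar u + v$, where $v$ vanishes at every grid point $k\DX$ and the restrictions $v_k := v|_{[k\DX,(k+1)\DX]}$ are mutually independent Brownian bridges with $\E[v_k(x)^2]\leq \DX/4$. Taylor-expanding
\begin{equation*}
  F(u) - F(\bar u) = F'(\bar u)\,v + \tfrac12 F''(\xi)\,v^2
\end{equation*}
and setting $X_k = \int_{k\DX}^{(k+1)\DX}[F(u) - F(\bar u)]\,dx$, the first term contributes, conditional on $U$, a centred Gaussian of variance bounded by $C\DX^3(1+U_k^2+U_{k+1}^2)$ (using $|F'(x)|\leq |F'(0)| + M_2|x|$ from the bound on $F''$ and the explicit bridge covariance), while the second is almost surely bounded by $(M_2/2)\int_{J_k} v^2\,dx$ with conditional mean $\CO(\DX^2)$ and variance $\CO(\DX^4)$. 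A standard exponential-moment estimate then gives
\begin{equation*}
  \log \E[\e^{X_k}\mid\Pi u = U]
  = \CO(\DX^2) + \CO\bigl(\DX^3 (1+U_k^2+U_{k+1}^2)\bigr)
\end{equation*}
uniformly in $U$ and $k$. Using conditional independence of the $v_k$, $\bar h(U) = \e^{F_n(U)}\prod_k \E[\e^{X_k}\mid\Pi u=U]$, and summing the per-interval logarithms yields $\log\bigl(\bar h(U)/\e^{F_n(U)}\bigr) = \CO(1/n) + \CO\bigl(\DX^2\sum_k (U_k^2+U_{k+1}^2)\bigr)$. The $U$-dependent remainder is a Riemann sum approximating $\DX\int \bar u^2\,dx$; its $\nu_n$-expectation is $\CO(1/n)$ by the uniform covariance bound in~\eqref{E:C-exact}, and this transfers to $\mu_n$ because $F\leq \sup F$ makes $d\mu_n/d\nu_n$ uniformly bounded above (and a similar one-sided Taylor bound for $F$ together with the uniform covariance bound keeps $Z_n$ bounded away from zero).

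Combining the estimates gives $Z = Z_n(1+\CO(1/n))$ and
\begin{equation*}
  \int \Bigl|\frac{\bar h(U)}{Z} - \frac{\e^{F_n(U)}}{Z_n}\Bigr|\,d\nu_n(U) = \CO(1/n),
\end{equation*}
which is the claimed total-variation bound. The principal obstacle is the per-interval exponential-moment estimate: since only $F''$ is assumed bounded, $F'$ may grow linearly and there is no pathwise uniform bound on $r_n$; one must integrate against $\mu_n$ and exploit the Gaussian tails of $\nu_n$ to absorb the $U$-dependent quadratic factors. The four boundary-condition cases of Section~\ref{S:SDE} enter only through the law of the grid values under $\nu_n$ and are handled uniformly, as the bridge structure on interior subintervals is the same in every case.
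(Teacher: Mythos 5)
Your proposal is sound and reaches the stated rate, but it organises the estimate differently from the paper. Both arguments rest on the same structural ingredients: the identity $\nu_n=\nu\circ\Pi^{-1}$ from lemma~\ref{L:exact}, the densities from lemmata \ref{L:density} and~\ref{L:FE-density}, the decomposition of $u$ under $\nu$ (conditional on $\Pi u$) into the piecewise-linear interpolant plus $n$ independent rescaled Brownian bridges, a second-order Taylor expansion of $F$, and Fernique-type exponential moments. The divergence is in how the conditioning on $\Pi$ is handled. The paper applies Jensen's inequality at the outset, $\E_\nu\bigl|\E_\nu(\cdot\mid\Pi)\bigr|\le\E_\nu|\cdot|$, discards the conditioning, removes the normalising constants via lemma~\ref{L:normalising-constants}, and then bounds the single global quantity $\int_0^1 F(U_x)-F(U^{(n)}_x)\,dx$ in $\rL^2$ using $|\e^x-1|\le|x|\e^{|x|}$ and one Cauchy--Schwarz step; there the dominant $\CO(1/n)$ contribution is the mean-zero first-order term $P_n$, whose standard deviation is of order $n^{-1}$. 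You instead keep the conditioning, factor $\bar h(U)/\e^{F_n(U)}$ into a product of per-interval conditional exponential moments using independence of the bridges, and control the logarithm of each factor; the first-order Gaussian term then enters only through its conditional variance ($\CO(\DX^3)$ per interval, $\CO(n^{-2})$ in total), so your dominant error is the conditional mean of the second-order term, again $\CO(1/n)$. Your route requires more bookkeeping — two-sided per-interval bounds (the lower bound follows from Jensen and the uniform bound on $F''$), an exponential moment for the $U$-dependent remainder $\DX^2\sum_k U_k^2$ (available since this is at most $\DX\,\|u\|_\infty^2$), and a uniform lower bound on $Z_n$ — but you correctly flag each of these, and in exchange the argument makes visible that the first-order fluctuation cancels in conditional expectation, a cancellation the paper's global bound does not exploit. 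The paper's version is shorter because the conditional expectation is eliminated immediately and the normalisation constants are dispatched by a single elementary lemma.
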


Before we prove this theorem, we first show some auxiliary results.
The following lemma will be used to get rid of the (not explicitly
known) normalisation constant~$Z_n$.

\begin{lemma}\label{L:normalising-constants}
  Let $(\Om, \CF, \mu)$ be a measure space and $f_1, f_2\colon \Om\to
  [0,\infty]$ integrable with $Z_i = \int f_i \,d\mu > 0$ for $i=1,2$.
  Then
  \begin{equation*}
    \int \Bigl| \frac{f_1}{Z_1} - \frac{f_2}{Z_2} \Bigr| \,d\mu
    \leq \frac{2}{\max(Z_1, Z_2)} \int \bigl| f_1 - f_2 \bigr| \,d\mu.
  \end{equation*}
\end{lemma}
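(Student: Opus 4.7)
The plan is to reduce everything to a single triangle-inequality estimate after a convenient renaming. By symmetry I may assume without loss of generality that $Z_1 \geq Z_2$, so that $\max(Z_1,Z_2) = Z_1$, and it suffices to prove
\begin{equation*}
  \int \Bigl| \frac{f_1}{Z_1} - \frac{f_2}{Z_2} \Bigr| \,d\mu
  \leq \frac{2}{Z_1} \int |f_1 - f_2| \,d\mu.
\end{equation*}

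The main trick is the standard ``add and subtract'' decomposition
\begin{equation*}
  \frac{f_1}{Z_1} - \frac{f_2}{Z_2}
  = \frac{f_1 - f_2}{Z_1} + f_2 \cdot \frac{Z_2 - Z_1}{Z_1 Z_2},
\end{equation*}
which separates the difference of the normalised densities into a term measuring the discrepancy of the unnormalised functions and a term measuring the discrepancy of their total masses. Applying the triangle inequality under the integral and using $\int f_2\,d\mu = Z_2$ yields
\begin{equation*}
  \int \Bigl| \frac{f_1}{Z_1} - \frac{f_2}{Z_2} \Bigr| \,d\mu
  \leq \frac{1}{Z_1} \int |f_1 - f_2|\,d\mu + \frac{|Z_1 - Z_2|}{Z_1}.
\end{equation*}

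To finish, I would bound the mass discrepancy by the pointwise discrepancy: $|Z_1 - Z_2| = \bigl|\int (f_1 - f_2)\,d\mu\bigr| \leq \int |f_1 - f_2|\,d\mu$. Substituting this in gives the desired factor of $2/Z_1 = 2/\max(Z_1,Z_2)$. There is no real obstacle here; the only care needed is to choose the decomposition so that the $Z_i$ in the denominator of the ``mass'' term is the one being integrated (here $Z_2$), so that the integral of $f_2$ cancels it and leaves $1/Z_1$, matching the first term.
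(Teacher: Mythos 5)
Your proof is correct and follows essentially the same route as the paper: the same add-and-subtract decomposition through $f_2/Z_1$, the triangle inequality, and the bound $|Z_1 - Z_2| \leq \int |f_1 - f_2|\,d\mu$ (the paper phrases this last step as the inverse triangle inequality for the $\rL^1$-norm, which is the same estimate since $Z_i = \int f_i\,d\mu$). The reduction to $Z_1 \geq Z_2$ by symmetry also matches the paper's argument.
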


\begin{proof}
  Using the $\rL^1$-norm $\|f\| = \int |f| \,d\mu$ we can write
  \begin{equation*}
    \bigl\| \frac{f_1}{Z_1} - \frac{f_2}{Z_2} \Bigr\|
    \leq \Bigl\| \frac{f_1}{Z_1} - \frac{f_2}{Z_1} \Bigr\|
      + \Bigl\| \frac{f_2}{Z_1} - \frac{f_2}{Z_2} \Bigr\| \\
    = \frac{1}{Z_1} \bigl\| f_1 - f_2 \bigr\|
      + \frac{|Z_2 - Z_1|}{Z_1Z_2} \bigl\|f_2\bigr\|.
  \end{equation*}
  Since $Z_i = \|f_i\|$ we can conclude
  \begin{equation*}
    \bigl\| \frac{f_1}{Z_1} - \frac{f_2}{Z_2} \Bigr\|
    \leq \frac{1}{Z_1} \bigl\| f_1 - f_2 \bigr\|
      + \frac{\bigl|\|f_2\| - \|f_1\|\bigr|}{Z_1}
    \leq \frac{2}{Z_1} \bigl\| f_1 - f_2 \bigr\|
  \end{equation*}
  where the second inequality comes from the inverse triangle
  inequality.  Without loss of generality we can assume $Z_1 \geq
  Z_2$ (otherwise interchange $f_1$ and $f_2$ in the above argument)
  and thus the claim follows.
\end{proof}

\begin{lemma}\label{L:Pi}
  Let $\mu$ and $\nu$
  be probability measures on $\rC\bigl([0,1],\R\bigr)$ with $\mu\ll\nu$
  and let $\Pi\colon \rC\bigl([0,1],\R\bigr)\to\R^I$
  be the projection from~\eqref{E:Pi}.
  Then $\mu\circ\Pi^{-1} \ll \nu\circ\Pi^{-1}$ and
  \begin{equation*}
    \frac{d(\mu\circ\Pi^{-1})}{d(\nu\circ\Pi^{-1})}\circ\Pi
    = \E_\nu\bigl( \frac{d\mu}{d\nu} \bigm| \Pi\bigr).
  \end{equation*}
\end{lemma}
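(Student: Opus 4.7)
The plan is to unwind the definitions: translate the claimed identity into a statement about how the pushforward $\mu\circ\Pi^{-1}$ integrates against test sets in $\R^I$, and then produce the density by conditioning the Radon--Nikodym derivative $d\mu/d\nu$ on the $\sigma$-algebra $\sigma(\Pi)$.

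Concretely, I would start by fixing a Borel set $A\subseteq \R^I$ and computing
\begin{equation*}
  (\mu\circ\Pi^{-1})(A) = \mu\bigl(\Pi^{-1}(A)\bigr)
     = \int_{\Pi^{-1}(A)} \frac{d\mu}{d\nu} \,d\nu,
\end{equation*}
using that $\mu\ll\nu$. Since $\Pi^{-1}(A)\in \sigma(\Pi)$, the defining property of conditional expectation under $\nu$ lets me replace the integrand by its conditional expectation without changing the integral, giving
\begin{equation*}
  (\mu\circ\Pi^{-1})(A)
     = \int_{\Pi^{-1}(A)} \E_\nu\bigl( \tfrac{d\mu}{d\nu} \bigm| \Pi \bigr) \,d\nu.
\end{equation*}

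The next step is the Doob--Dynkin factorisation: the random variable $\E_\nu(d\mu/d\nu \mid \Pi)$ is $\sigma(\Pi)$-measurable, so it equals $g\circ\Pi$ for some Borel function $g\colon \R^I\to\R$. Applying the change-of-variables formula for pushforward measures then yields
\begin{equation*}
  \int_{\Pi^{-1}(A)} g\circ\Pi \,d\nu
     = \int_A g \,d(\nu\circ\Pi^{-1}).
\end{equation*}
Combining the last three displayed equations, $(\mu\circ\Pi^{-1})(A) = \int_A g\,d(\nu\circ\Pi^{-1})$ for every Borel $A$, which simultaneously shows $\mu\circ\Pi^{-1}\ll\nu\circ\Pi^{-1}$ and identifies $g$ as the Radon--Nikodym derivative. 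Composing with $\Pi$ recovers the stated identity.

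There is no real obstacle here: the only nontrivial ingredient is the Doob--Dynkin lemma, which applies cleanly because the target $\R^I$ is a standard Borel space. If anything, the thing to be slightly careful about is that $d\mu/d\nu$ is only defined $\nu$-a.e., so $g$ (and hence $d(\mu\circ\Pi^{-1})/d(\nu\circ\Pi^{-1})$) is only defined $(\nu\circ\Pi^{-1})$-a.e., and the identity in the statement is understood in that almost-everywhere sense.
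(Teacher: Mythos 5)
Your proof is correct and follows essentially the same route as the paper's: factor $\E_\nu(d\mu/d\nu\mid\Pi)$ through $\Pi$ via Doob--Dynkin, use the defining property of conditional expectation on sets in $\sigma(\Pi)$, and apply the change-of-variables formula for the pushforward to identify the resulting function as the Radon--Nikodym derivative. The paper merely runs the same chain of equalities in the opposite direction, so there is nothing to add.
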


\begin{proof}
  Let $\phi = \frac{d\mu}{d\nu}$.  Since $\E(\phi|\Pi)$ is
  $\Pi$-measurable, there is a function $\psi\colon \R\to \R$ with
  $\E(\phi|\Pi) = \psi\circ\Pi$.  Let $A\subseteq\R$ be measurable.
  Then
  \begin{multline*}
    \int_{\R} \psi \, 1_A \,d(\nu\circ\Pi^{-1})
    = \int_{\rC\bigl([0,1],\R\bigr)} \psi\circ\Pi \, 1_{\Pi^{-1}(A)} \,d\nu
    = \int_{\rC\bigl([0,1],\R\bigr)} \E(\phi|\Pi) \, 1_{\Pi^{-1}(A)} \,d\nu \\
    = \int_{\rC\bigl([0,1],\R\bigr)} \phi \, 1_{\Pi^{-1}(A)} \,d\nu
    = \mu\circ\Pi^{-1}(A)
  \end{multline*}
  by the definition of conditional expectation.  This shows that
  $\psi$ is indeed the required density.
\end{proof}

\begin{proof} (of theorem~\ref{T:result}).
  Let $\Pi$, $\nu$ and $\nu_n$ as in lemma~\ref{L:exact}.  Using
  lemmata \ref{L:exact} and~\ref{L:Pi} we find
  \begin{equation}\label{E:proof-start}
    \begin{split}
    \bigl\| \mu\circ\Pi^{-1} - \mu_n \bigr\|_{\mathrm{TV}}
    &= \E_{\nu_n}\Bigl| \frac{d\mu\circ\Pi^{-1}}{d\nu_n} - \frac{d\mu_n}{d\nu_n} \Bigr| \\
    &= \E_\nu \Bigl| \frac{d\mu\circ\Pi^{-1}}{d\nu\circ\Pi^{-1}}\circ\Pi - \frac{d\mu_n}{d\nu_n}\circ\Pi \Bigr| \\
    &= \E_\nu\Bigl| \E_\nu\bigl( \frac{d\mu}{d\nu} - \frac{d\mu_n}{d\nu_n}\circ\Pi \bigm| \Pi\bigr) \Bigr| \\
    &\leq \E_\nu\Bigl| \frac{d\mu}{d\nu} - \frac{d\mu_n}{d\nu_n}\circ\Pi \Bigr|.
  \end{split}
  \end{equation}
  From lemma~\ref{L:density} we know
  \begin{equation*}
    \frac{d\mu}{d\nu}(U)
    = \frac{1}{Z} \exp\Bigl( \int_0^1 F\bigl(U_x\bigr) \,dx \Bigr).
  \end{equation*}
  Lemma~\ref{L:FE-density} gives
  \begin{equation*}
    \frac{d\mu_n}{d\nu_n}
    = \frac{1}{Z_n} \exp\bigl( F_n \bigr),
  \end{equation*}
  and by the definition of $F_n$ we have
  \begin{equation*}
    \frac{d\mu_n}{d\nu_n}\circ\Pi(U)
    = \frac{1}{Z_n} \exp\Bigl(\int_0^1 F\bigl(U^{(n)}_x\bigr) \,dx \Bigr)
  \end{equation*}
  where $U_u = \sum \Pi(U)_j \phi_j$ for all $U\in
  \rC\bigl([0,1],\R\bigr)$ and $\phi_j$, $j\in I$ are the finite
  element basis functions.  Using lemma~\ref{L:normalising-constants} we
  get
  \begin{multline*}
    \bigl\| \mu\circ\Pi^{-1} - \mu_n \bigr\|_{\mathrm{TV}}
    \leq \E_\nu\Bigl| \frac{d\mu}{d\nu}
                        - \frac{d\mu_n}{d\nu_n}\circ\Pi \Bigr| \\
    \leq \frac{2}{Z} \int \Bigl|
 \exp\Bigl( \int_0^1 F\bigl(U_x\bigr) \,dx \Bigr)
 - \exp\Bigl(\int_0^1 F\bigl(U^{(n)}_x\bigr) \,dx \Bigr)
 \Bigr| \,d\nu(U).
  \end{multline*}
  Since the inequality $\bigl|\e^x - 1 \bigr| \leq |x|
  \exp\bigl(|x|\bigr)$ holds for all $x\in\R$, we conclude
  \begin{equation*}
    \begin{split}
      &\hskip-5mm \bigl\| \mu\circ\Pi^{-1} - \mu_n \bigr\|_{\mathrm{TV}} \\
      &\leq \frac{2}{Z}
        \E \left( \exp\Bigl( \int_0^1 F\bigl(U^{(n)}_x\bigr) \,dx \Bigr)
        \cdot \Bigl|
            \exp\Bigl( \int_0^1 F\bigl(U_x\bigr) - F\bigl(U^{(n)}_x\bigr) \,dx \Bigr)
            - 1 \Bigr| \right) \\
      &\leq \frac{2}{Z}
        \E \left( \exp\Bigl( \int_0^1 F\bigl(U^{(n)}_x\bigr) \,dx \Bigr) \right. \\
       &\hskip1.5cm
        \left. \cdot
        \Bigl| \int_0^1 F\bigl(U_x\bigr) - F\bigl(U^{(n)}_x\bigr) \,dx \Bigr|
            \cdot \exp\Bigl( \bigl| \int_0^1 F\bigl(U_x\bigr) - F\bigl(U^{(n)}_x\bigr) \,dx \bigr| \Bigr)
            \right)
    \end{split}
  \end{equation*}
  where $U$ is distributed according to the Gaussian measure~$\nu$.
  Since $F$ is bounded from above we can estimate the the first
  exponential in the expectation by a constant.  Using the
  Cauchy-Schwarz inequality we get
  \begin{multline}\label{E:central-estimate}
    \bigl\| \mu\circ\Pi^{-1} - \mu_n \bigr\|_{\mathrm{TV}} \\
    \leq c_1
      \Bigl\| \int_0^1 F\bigl(U_x\bigr) - F\bigl(U^{(n)}_x\bigr) \,dx \Bigr\|_2
      \cdot
      \Bigl\| \exp\Bigl( \bigl| \int_0^1 F\bigl(U_x\bigr) - F\bigl(U^{(n)}_x\bigr) \,dx \bigr| \Bigr) \Bigr\|_2
  \end{multline}
  for some constant~$c_1$.

  \begin{figure}
    \begin{center}
      \begin{tikzpicture}
\draw[thin] (0.00,0.00) -- (0.05,-0.15) -- (0.10,-0.56) -- (0.15,-0.55) -- (0.20,0.01) -- (0.25,-0.20) -- (0.30,0.11) -- (0.35,0.05) -- (0.40,0.22) -- (0.45,-0.19) -- (0.50,-0.34) -- (0.55,-0.24) -- (0.60,-0.04) -- (0.65,-0.01) -- (0.70,0.09) -- (0.75,0.01) -- (0.80,-0.34) -- (0.85,-0.58) -- (0.90,-1.15) -- (0.95,-0.90) -- (1.00,-1.08) -- (1.05,-1.05) -- (1.10,-1.35) -- (1.15,-1.46) -- (1.20,-1.49) -- (1.25,-1.73) -- (1.30,-1.84) -- (1.35,-1.79) -- (1.40,-1.77) -- (1.45,-1.42) -- (1.50,-1.22) -- (1.55,-1.05) -- (1.60,-0.75) -- (1.65,-0.89) -- (1.70,-0.78) -- (1.75,-0.79) -- (1.80,-0.83) -- (1.85,-0.94) -- (1.90,-0.99) -- (1.95,-1.07) -- (2.00,-0.67) -- (2.05,-0.71) -- (2.10,-0.55) -- (2.15,-0.51) -- (2.20,-0.67) -- (2.25,-0.53) -- (2.30,-0.47) -- (2.35,-0.69) -- (2.40,-0.74) -- (2.45,-0.49) -- (2.50,-0.43) -- (2.55,-0.52) -- (2.60,-0.48) -- (2.65,-0.40) -- (2.70,-0.22) -- (2.75,-0.22) -- (2.80,-0.32) -- (2.85,-0.18) -- (2.90,-0.43) -- (2.95,-0.79) -- (3.00,-0.88) -- (3.05,-1.21) -- (3.10,-1.13) -- (3.15,-1.31) -- (3.20,-1.38) -- (3.25,-1.34) -- (3.30,-1.26) -- (3.35,-1.17) -- (3.40,-1.26) -- (3.45,-1.29) -- (3.50,-0.86) -- (3.55,-0.91) -- (3.60,-1.34) -- (3.65,-1.25) -- (3.70,-0.99) -- (3.75,-0.42) -- (3.80,-0.34) -- (3.85,-0.10) -- (3.90,-0.39) -- (3.95,-0.57) -- (4.00,-0.33) -- (4.05,-0.65) -- (4.10,-0.78) -- (4.15,-0.93) -- (4.20,-1.19) -- (4.25,-0.85) -- (4.30,-0.67) -- (4.35,-0.18) -- (4.40,-0.12) -- (4.45,-0.32) -- (4.50,-0.38) -- (4.55,-0.33) -- (4.60,0.12) -- (4.65,-0.03) -- (4.70,-0.27) -- (4.75,-0.27) -- (4.80,0.07) -- (4.85,0.15) -- (4.90,0.33) -- (4.95,0.43) -- (5.00,0.50) -- (5.05,0.89) -- (5.10,0.74) -- (5.15,1.11) -- (5.20,1.39) -- (5.25,1.29) -- (5.30,1.59) -- (5.35,1.70) -- (5.40,1.49) -- (5.45,1.46) -- (5.50,1.39) -- (5.55,1.53) -- (5.60,1.76) -- (5.65,1.55) -- (5.70,1.45) -- (5.75,1.49) -- (5.80,1.46) -- (5.85,1.23) -- (5.90,1.62) -- (5.95,1.44) -- (6.00,1.25) -- (6.05,1.00) -- (6.10,1.33) -- (6.15,1.27) -- (6.20,1.63) -- (6.25,1.55) -- (6.30,1.40) -- (6.35,1.11) -- (6.40,1.42) -- (6.45,1.37) -- (6.50,1.40) -- (6.55,1.22) -- (6.60,0.95) -- (6.65,0.75) -- (6.70,0.51) -- (6.75,0.42) -- (6.80,0.24) -- (6.85,0.32) -- (6.90,0.60) -- (6.95,0.60) -- (7.00,0.73) -- (7.05,0.73) -- (7.10,0.69) -- (7.15,0.60) -- (7.20,0.32) -- (7.25,0.20) -- (7.30,-0.05) -- (7.35,-0.03) -- (7.40,-0.08) -- (7.45,-0.03) -- (7.50,-0.02) -- (7.55,-0.17) -- (7.60,-0.21) -- (7.65,0.00) -- (7.70,-0.23) -- (7.75,-0.18) -- (7.80,-0.39) -- (7.85,0.06) -- (7.90,0.04) -- (7.95,0.14) -- (8.00,0.18) -- (8.05,0.14) -- (8.10,-0.03) -- (8.15,-0.18) -- (8.20,0.18) -- (8.25,-0.34) -- (8.30,0.13) -- (8.35,0.45) -- (8.40,0.46) -- (8.45,0.20) -- (8.50,0.25) -- (8.55,0.16) -- (8.60,-0.00) -- (8.65,0.05) -- (8.70,-0.29) -- (8.75,-0.58) -- (8.80,-0.71) -- (8.85,-0.45) -- (8.90,-0.29) -- (8.95,-0.21) -- (9.00,-0.01) -- (9.05,-0.14) -- (9.10,-0.15) -- (9.15,0.05) -- (9.20,0.26) -- (9.25,0.48) -- (9.30,0.45) -- (9.35,0.07) -- (9.40,0.55) -- (9.45,0.29) -- (9.50,0.27) -- (9.55,0.03) -- (9.60,0.03) -- (9.65,0.11) -- (9.70,0.38) -- (9.75,0.00) -- (9.80,0.19) -- (9.85,0.07) -- (9.90,-0.09) -- (9.95,-0.13) -- (10.00,-0.18);\draw[thick] (0.00,0.00) -- (1.00,-1.08) -- (2.00,-0.67) -- (3.00,-0.88) -- (4.00,-0.33) -- (5.00,0.50) -- (6.00,1.25) -- (7.00,0.73) -- (8.00,0.18) -- (9.00,-0.01) -- (10.00,-0.18);\draw[->]  (-0.3,-2.04) -- (10.5,-2.04) node[below] {$x$};
\draw[thin] (0.00,0.00) -- (0.00,-2.04);
\draw[thin] (1.00,-1.08) -- (1.00,-2.04);
\draw[thin] (2.00,-0.67) -- (2.00,-2.04);
\draw[thin] (3.00,-0.88) -- (3.00,-2.04);
\draw[thin] (4.00,-0.33) -- (4.00,-2.04);
\draw[thin] (5.00,0.50) -- (5.00,-2.04);
\draw[thin] (6.00,1.25) -- (6.00,-2.04);
\draw[thin] (7.00,0.73) -- (7.00,-2.04);
\draw[thin] (8.00,0.18) -- (8.00,-2.04);
\draw[thin] (9.00,-0.01) -- (9.00,-2.04);
\draw[thin] (10.00,-0.18) -- (10.00,-2.04);
\draw (0,-2.04) node[below] {$0$};
\draw (1,-2.04) node[below] {$1/n$};
\draw (2,-2.04) node[below] {$2/n$};
\draw (10,-2.04) node[below] {$1$};
\draw (5.35,1.70) node[above] {$U$};
\draw (5.35,0.76) node[below,xshift=5] {$U^{(n)}$};
\end{tikzpicture}
    \end{center}
    \caption{\label{fig:BB}\it Illustration of the convergence of
      $U^{(n)}$ to $U$.  Under the distribution $\nu$, the path $U$ is
      a Brownian Bridge with random boundary conditions.  Since
      $U^{(n)}$ is the linear interpolation of $U$ between the grid
      points, the difference $U^{(n)}-U$ consists of a chain of $n$
      independent Brownian bridges.}
  \end{figure}
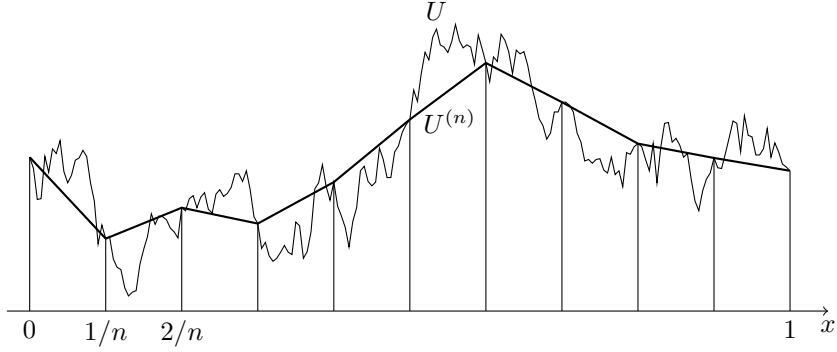

  The main step in the proof is to estimate the right-hand side
  of~\ref{E:central-estimate} by showing that $\bigl| \int_0^1 F(U_x) -
  F(U^{(n)}_x) \,dx \bigr|$ gets small as $n\to\infty$.  By
  lemma~\ref{L:C-exact}, the path $U$ in stationarity is just a
  Brownian bridge (with random boundary values) and, by definition,
  $U^{(n)}$ is the linear interpolation of the values of $U$ at the
  grid points (see figure~\ref{fig:BB} for illustration).  Thus, the
  difference $U^{(n)} - U$ can be written as
  \begin{equation*}
    \bigl(U-U^{(n)}\bigr)(x)
    = \sum_{i=1}^n 1_{[\frac{i-1}{n},\frac{i}{n}]}(x) \; \frac{1}{\sqrt{n}}
        B^{(i)}_{nx - (i-1)}
  \end{equation*}
  where $B^{(1)}, \ldots, B^{(n)}$ are standard Brownian
  bridges, independent of each other and of $U^{(n)}$.
  Using Taylor approximation for $F$ we find
  \begin{multline}\label{E:Taylor}
    \Bigl| \int_0^1 F\bigl(U_x\bigr) - F\bigl(U^{(n)}_x\bigr) \,dx \Bigr| \\
    \leq \Bigl| \int_0^1 f\bigl(U^{(n)}_x\bigr)\bigl(U_x-U^{(n)}_x\bigr) \,dx \Bigr|
         + \frac12 \|F''\|_\infty \int_0^1 \bigl(U_x-U^{(n)}_x\bigr)^2 \,dx \\
    =: \bigl| P_n \bigr| + \frac12 \|F''\|_\infty Q_n.
  \end{multline}

  For the term $|P_n|$ we find
  \begin{equation*}
    P_n
    = \sum_{i=1}^n \int_0^{1/n}
        f\bigl(U^{(n)}_{\frac{i-1}{n}+x}\bigr) \frac{1}{\sqrt{n}} B^{(i)}_{nx} \,dx
    = n^{-3/2} \sum_{i=1}^n \int_0^1 f(U^{(n)}_{\frac{i-1}{n}+y/n}) B^{(i)}_{y} \,dy
  \end{equation*}
  where $B^{(i)}$ are the Brownian bridges defined above.  As an
  abbreviation write $\bar f_i(y) =
  f\bigl(U^{(n)}_{\frac{i-1}{n}+y/n}\bigr)$.  Conditioned on the value
  of $U^{(n)}$, the integrals $\int_0^1 \bar f_i(y) B^{(i)}_{y} \,dy$
  are centred Gaussian with
  \begin{multline*}
  \Var\Bigl(\int_0^1 \bar f_i(y) B^{(i)}_{y} \,dy \Bigm| U^{(n)} \Bigr)
    = \int_0^1 \bar f_i(y) \int_0^1 (y\wedge z - yz) \bar f_i(z) \,dz \,dy \\
    \leq c_2 \| \bar f_i \|_\infty^2
    \leq c_3^2 \bigl( \|U^{(n)}\|_\infty + 1 \bigr)^2
  \end{multline*}
  for some constants $c_2, c_3 > 0$ where the last inequality uses the
  fact the $f$ is Lipschitz continuous.  We get $\E\bigl(P_n\bigm|
  \Pi(U) \bigr) = 0$ and, since the $B^{(i)}$ are independent,
  \begin{equation*}
  \E\bigl( P_n^2 \bigm| U^{(n)} \bigr)
    \leq c_3^2 \bigl( \|U^{(n)}\|_\infty + 1 \bigr)^2  n^{-2}.
  \end{equation*}
  Using the tower property for conditional expectations, and using
  $\|U^{(n)}\|_\infty \leq \|U\|_\infty$, we get
  \begin{equation*}
  \E\bigl( P_n^2 \bigr)
    \leq c_3^2 \E\Bigl(\bigl( \|U\|_\infty + 1 \bigr)^2\Bigr)  n^{-2}.
  \end{equation*}
  Since $U$ is a Gaussian process, $\bigl\| \|U\|_\infty + 1
  \bigr\|_2$ is finite, and we can conclude
  \begin{equation*}
    \E\bigl( P_n^2 \bigr) \leq c_4^2 n^{-2}
  \end{equation*}
  for some constant~$c_4$.  Similarly, for $Q_n$ we find
  \begin{equation}\label{E:Qn}
    Q_n = \sum_{i=1}^n \int_0^1 \bigl(B^{(i)}_y)^2 \,dy \cdot n^{-2}
  \end{equation}
  and thus, using independence of the $B^{(i)}$ again,
  \begin{equation*}
    \E\bigl( Q_n^2 \bigr)
    = c_5^2 n^{-3}
  \end{equation*}
  for some constant~$c_5$.  Combining these estimates we get
  \begin{equation}\label{E:term1}
    \Bigl\| \int_0^1 F\bigl(U_x\bigr) - F\bigl(U^{(n)}_x\bigr) \,dx \Bigr\|_2
    = \bigl\| P_n \bigr\|_2
        + \frac12 \bigl\|F''\bigr\|_\infty \bigl\| Q_n \bigr\|_2
    \leq c_4 n^{-1} + c_5 n^{-3/2}
  \end{equation}
  and thus we have shown the required bound for the first factor
  of~\eqref{E:central-estimate}.

  Finally, we have to show that the second factor
  in~\eqref{E:central-estimate} is bounded, uniformly in~$n$:
  From \eqref{E:Taylor} we get
  \begin{multline}\label{E:term2-start}
    \Bigl\| \exp\Bigl( \bigl| \int_0^1 F\bigl(U_x\bigr) - F\bigl(U^{(n)}_x\bigr) \,dx \bigr| \Bigr) \Bigr\|_2
    \leq \E\Bigl( \exp\bigl( 2 |P_n| + \|F''\|_\infty Q_n \bigr) \Bigr)^{1/2} \\
    \leq \bigl\| \e^{2|P_n|} \bigr\|_2^{1/2}
      \cdot \bigl\| \e^{\|F''\|_\infty Q_n} \bigr\|_2^{1/2}.
  \end{multline}
  It is easy to check that, for all $\sigma>0$, an
  $\CN(0,\sigma^2)$-distributed random variable~$X$ satisfies the
  inequality $\E(\e^{|X|}) \leq 2\e^{\sigma^2/2}$ and thus we have
  \begin{equation}\label{E:exp-P}
    \bigl\| \e^{2|P_n|} \bigr\|_2^{1/2}
      = \E\bigl( \e^{4|P_n|} \bigr)^{1/4}
      \leq 2 \exp(8\, c_4^2 n^{-2})
      < 1
  \end{equation}
  for all sufficiently large~$n$.  Furthermore, using~\eqref{E:Qn}
  and the fact that the $B^{(i)}$ are i.i.d., we find
  \begin{equation}\label{E:exp-Q-start}
    \bigl\| \e^{\|F''\|_\infty Q_n} \bigr\|_2^{1/2}
    = \E\Bigl( \exp\bigl( 2 \|F''\|_\infty |B^{(1)}|_{\rL^2}^2 \, n^{-2} \bigr) \Bigr)^{n/4}
  \end{equation}
  where we write $|\quark|_{\rL^2}$ for the $\rL^2$-norm on the space
  $\rL^2\bigl([0,1],\R\bigr)$.  By Fernique's theorem \citep{Fe70}
  there exists an $\eps>0$ with $\E\bigl( \exp(\eps \,
  |B^{(1)}|_{\rL^2}^2) \bigr) < \infty$.  For all $\lambda>0$ we have
  \begin{equation*}
  \E\bigl(\e^{\lambda |B^{(1)}|_{\rL^2}^2}\bigr)
    = \int_0^\infty \P\bigl(\e^{\lambda |B^{(1)}|_{\rL^2}^2}\geq a\bigr) \,da
    \leq 1 + \int_0^\infty \P\bigl(|B^{(1)}|_{\rL^2}^2 \geq b\bigr) \,\lambda\e^{\lambda b} \,db
  \end{equation*}
  and using Markov's inequality $\P\bigl(|B^{(1)}|_{\rL^2}^2\geq
  b\bigr) \leq \E\bigl(\e^{\eps |B^{(1)}|_{\rL^2}^2}\bigr) \e^{-\eps
    b}$ we get
  \begin{equation*}
  \E\bigl(\e^{\lambda |B^{(1)}|_{\rL^2}^2}\bigr)
    \leq 1 + \int_0^\infty \E\bigl(\e^{\eps |B^{(1)}|_{\rL^2}^2}\bigr) \e^{-\eps b} \,\lambda\e^{\lambda b} \,db
    = 1 + \frac{\lambda}{\eps-\lambda} \E\bigl(\e^{\eps |B^{(1)}|_{\rL^2}^2}\bigr)
  \end{equation*}
  for all $\lambda<\eps$.  Substituting this bound
  into~\eqref{E:exp-Q-start} we have
  \begin{equation}\label{E:exp-Q}
    \bigl\| \e^{\|F''\|_\infty Q_n} \bigr\|_2^{1/2}
    \leq \bigl(1 + \frac{c_6}{n^2} \bigr)^n
    \leq 2
  \end{equation}
  for some constant $c_6$ and all sufficiently large~$n$.  From
  \eqref{E:exp-P} and~\eqref{E:exp-Q} we see that the right-hand
  side of~\eqref{E:term2-start} is bounded uniformly in~$n$,
  \textit{i.e.}
  \begin{equation}\label{E:term2}
    \Bigl\| \exp\Bigl( \bigl| \int_0^1 F\bigl(U_x\bigr) - F\bigl(U^{(n)}_x\bigr) \,dx \bigr| \Bigr) \Bigr\|_2
      \leq c_7
  \end{equation}
  for all $n\in\N$ and some constant~$c_7$.

  Combining \eqref{E:term1} and~\eqref{E:term2} we see that the
  right-hand side in~\eqref{E:central-estimate} is of
  order~$\CO(n^{-1})$.  This completes the proof.
\end{proof}

In the preceding theorem we compared the stationary
distribution~$\mu_n$ of the finite element SDE on $\R^I$ and the
stationary distribution~$\mu$ of the SPDE on $\rC\bigl([0,1],
\R\bigr)$ by projecting $\mu$ onto the finite dimensional space
$\R^I$.  An alternative approach is to embed $\R^I$ into
$\rC\bigl([0,1], \R\bigr)$ instead.  A na{\"\i}ve implementation of
this idea would be to extend vectors from $\R^I$ to continuous
functions via linear interpolation.  Unfortunately, the image of
$\mu_n$ when projected to $\rC\bigl([0,1], \R\bigr)$ in this way would
be mutually singular with $\mu$ and thus the total variation norm
would not provide a useful measure for the distance between the two
distributions.  For this reason, we choose here a different approach,
described in the following definition.

\begin{definition}\label{D:hat}
  Given a probability measure $\mu_n$ on $\R^I$, we define a
  distribution $\hat\mu_n$ as follows: Consider a random variable $X$
  which is distributed according to $\mu_n$.  Given $X$, construct
  $Y\in \rC\bigl([0,1],\R\bigr)$ by setting $Y(k\,\DX) = X_k$ for
  $k=0, 1, \dots, n$ and filling the gaps between these points with
  $n$ Brownian bridges, independent of $X$ and of each other.  Then we
  denote the distribution of $Y$ by~$\hat\mu_n$.
\end{definition}

\begin{lemma}\label{L:Pi-and-hat}
  Let $\mu_n$ and $\nu_n$ be probability measures on $\R^I$ with
  $\mu_n \ll \nu_n$.  Then $\hat\mu_n \ll \hat\nu_n$ with
  \begin{equation*}
    \frac{d\hat\mu_n}{d\hat\nu_n} = \frac{d\mu_n}{d\nu_n}\circ\Pi
  \end{equation*}
  on $\rC\bigl([0,1],\R\bigr)$.
\end{lemma}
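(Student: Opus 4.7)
The plan is to write both $\hat\mu_n$ and $\hat\nu_n$ in the form ``measure on $\R^I$ composed with a common Markov kernel,'' and then push the Radon--Nikodym derivative on $\R^I$ through that kernel. Concretely, let $K(x,\,\cdot\,)$ denote the law on $\rC([0,1],\R)$ obtained, for each fixed $x\in\R^I$, by interpolating linearly between the values $x_k$ at the grid points $k\DX$ and adding independent Brownian bridges on each subinterval, so that definition~\ref{D:hat} reads
\begin{equation*}
  \hat\mu_n(A) = \int_{\R^I} K(x, A) \, d\mu_n(x),
  \qquad
  \hat\nu_n(A) = \int_{\R^I} K(x, A) \, d\nu_n(x),
\end{equation*}
for every measurable $A\subseteq \rC([0,1],\R)$. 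The crucial observation is that $K(x,\,\cdot\,)$ is concentrated on the fibre $\{y\in \rC([0,1],\R) : \Pi(y) = x\}$, so that $\Pi(Y)=x$ holds $K(x,\,\cdot\,)$-almost surely.

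Set $\phi = d\mu_n/d\nu_n$. The first step is to substitute the definition of $\phi$ into the formula for $\hat\mu_n$, giving
\begin{equation*}
  \hat\mu_n(A) = \int_{\R^I} \phi(x)\, K(x,A) \,d\nu_n(x).
\end{equation*}
The second step is to rewrite $\phi(x)$ as $\phi(\Pi(y))$ under the inner integral $\int_A K(x,dy)$, which is legitimate because $\Pi(y)=x$ for $K(x,\,\cdot\,)$-almost all~$y$. Then Fubini/Tonelli (applied to the nonnegative integrand $\phi\circ\Pi \cdot 1_A$) yields
\begin{equation*}
  \hat\mu_n(A)
   = \int_{\R^I} \int_A \phi(\Pi(y))\, K(x,dy)\, d\nu_n(x)
   = \int_A \phi(\Pi(y)) \,d\hat\nu_n(y).
\end{equation*}
Since $A$ was arbitrary, this identifies $\phi\circ\Pi$ as a version of $d\hat\mu_n/d\hat\nu_n$ and, in particular, shows $\hat\mu_n \ll \hat\nu_n$.

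The only points that require care are measurability and integrability, both of which are routine: $\phi\circ\Pi$ is measurable on $\rC([0,1],\R)$ because $\Pi$ is continuous and $\phi$ is Borel, and the interchange of integration is justified by nonnegativity of $\phi$. There is no real obstacle in this argument — essentially the lemma says that when we blow $\mu_n$ and $\nu_n$ up to measures on path space using the \emph{same} Markov kernel, densities are preserved on the factor that is varied, which is visible directly from the Fubini computation above.
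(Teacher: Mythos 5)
Your proof is correct and is essentially the paper's argument in different clothing: where you represent $\hat\mu_n$ and $\hat\nu_n$ as mixtures $\int K(x,\cdot)\,d\mu_n(x)$ and $\int K(x,\cdot)\,d\nu_n(x)$ with a common kernel concentrated on the fibre $\{\Pi = x\}$ and push the density through by Fubini, the paper expresses the same fact by saying that conditioned on $\Pi$ the two lifted measures agree and then applies the tower property. Both hinge on the identical observation that $\phi(x)$ may be replaced by $\phi(\Pi(y))$ under the inner integral, so no substantive difference or gap remains.
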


\begin{proof}
  Let $\psi = \frac{d\mu_n}{d\nu_n}$.  Using substitution we get
  \begin{equation*}
    \E_{\hat\mu_n}\bigl(f\circ\Pi\bigr)
    = \int_{\R^I} f \,d\mu_n
    = \int_{\R^I} f \psi \,d\nu_n
    = \E_{\hat\nu_n}\bigl(f\circ\Pi \cdot \psi\circ\Pi \bigr)
  \end{equation*}
  for all integrable $f\colon \R^I\to \R$.  Since, conditioned on
  the value of $\Pi$, the distributions $\hat\mu_n$ and $\hat\nu_n$
  are the same, we can use the tower property to get
  \begin{align*}
    \hat\mu(A)
    &= \E_{\hat\mu_n}\bigl(\E_{\hat\mu_n}(1_A|\Pi)\bigr)
    = \E_{\hat\mu_n}\bigl(\E_{\hat\nu_n}(1_A|\Pi)\bigr) \\
    &= \E_{\hat\nu_n}\bigl(\E_{\hat\nu_n}(1_A|\Pi) \cdot \psi\circ\Pi \bigr)
    = \E_{\hat\nu_n}\bigl(1_A \psi\circ\Pi \bigr)
  \end{align*}
  for every measurable set~$A$.  This shows that $\psi\circ\Pi$ is the
  required density.
\end{proof}

\begin{corollary}\label{C:other}
  Let $\mu$ be the stationary distribution of the SPDE~\eqref{E:SPDE}
  on $\rC\bigl([0,1],\R\bigr)$.  Let $\mu_n$ be the stationary
  distribution of the finite element equation~\eqref{E:FE-SDE}
  on~$\R^I$.  Let $\CL$ be negative and assume $f=F'$ where $F\in
  \rC^2(\R)$ is bounded from above with bounded second derivative.
  Then
  \begin{equation*}
    \bigl\| \mu - \hat \mu_n \bigr\|_{\mathrm{TV}}
    = \CO\bigl(\frac1n\bigr)
  \end{equation*}
  as $n\to\infty$.
\end{corollary}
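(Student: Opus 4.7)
The plan is to reduce the corollary to the computation already carried out in the proof of theorem~\ref{T:result}; the only genuinely new ingredient will be the identification $\nu = \hat\nu_n$, where $\nu$ is the Gaussian measure from lemma~\ref{L:C-exact} and $\hat\nu_n$ is the lifting of $\nu_n$ defined in definition~\ref{D:hat}. Once this identification is in place, the corollary will follow by recycling the estimates from theorem~\ref{T:result}.

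To establish $\nu = \hat\nu_n$, I would use the representation $U(x) = (1-x)L + xR + B(x)$ from lemma~\ref{L:C-exact}, in which $B$ is a standard Brownian bridge independent of $(L,R)$. By lemma~\ref{L:exact}, $\Pi U$ has distribution $\nu_n$. Conditional on $\Pi U$, the affine part is fully determined (since $L=U(0)$ and $R=U(1)$), and the Markov property of the Brownian bridge forces the conditional distribution of $B$ on each subinterval $[(k-1)/n,\,k/n]$ to be an independent Brownian bridge between the prescribed endpoint values. This is precisely the construction of $\hat\nu_n$ from $\nu_n$ in definition~\ref{D:hat}, so $\nu = \hat\nu_n$.

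With this in hand, lemma~\ref{L:FE-density} gives $\mu_n \ll \nu_n$, and then lemma~\ref{L:Pi-and-hat} yields $\hat\mu_n \ll \nu$ together with
\[
\frac{d\hat\mu_n}{d\nu}(U)
= \frac{d\mu_n}{d\nu_n}(\Pi U)
= \frac{1}{Z_n}\exp\Bigl(\int_0^1 F\bigl(U^{(n)}_x\bigr)\,dx\Bigr),
\]
where $U^{(n)} = \sum_{j\in I}(\Pi U)_j\phi_j$ is the piecewise-linear interpolant of $U$ at the grid points. Combined with lemma~\ref{L:density}, this gives
\[
\bigl\|\mu - \hat\mu_n\bigr\|_{\mathrm{TV}}
= \E_\nu\Bigl|\frac{d\mu}{d\nu} - \frac{d\hat\mu_n}{d\nu}\Bigr|,
\]
which is exactly the expression bounded starting from~\eqref{E:proof-start}. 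I would then apply lemma~\ref{L:normalising-constants} to eliminate the normalisation constants, the elementary bound $|\e^x-1|\leq|x|\e^{|x|}$, and Cauchy--Schwarz to arrive at~\eqref{E:central-estimate}, after which~\eqref{E:term1} and~\eqref{E:term2} deliver the $\CO(1/n)$ rate verbatim.

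The main obstacle, insofar as there is one, is the identification $\nu = \hat\nu_n$: it relies on the explicit Brownian-bridge-plus-affine representation of the linear stationary law furnished by lemma~\ref{L:C-exact}, and would be the step most sensitive to changing the spatial dimension or the covariance structure of the noise. Once this step is cleared, the total-variation bound inherits its rate directly from theorem~\ref{T:result} with no further work.
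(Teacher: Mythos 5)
Your proposal is correct and follows essentially the same route as the paper: establish $\nu=\hat\nu_n$ via the Brownian-bridge representation of lemma~\ref{L:C-exact}, the Markov property, and lemma~\ref{L:exact}, then invoke lemma~\ref{L:Pi-and-hat} to rewrite $\bigl\|\mu-\hat\mu_n\bigr\|_{\mathrm{TV}}$ as $\E_\nu\bigl|\frac{d\mu}{d\nu}-\frac{d\mu_n}{d\nu_n}\circ\Pi\bigr|$ and reuse the estimates from theorem~\ref{T:result} starting at~\eqref{E:proof-start}. No gaps.
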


\begin{proof}
  Let $\nu$ be the stationary distribution of the linear
  SPDE~\eqref{E:linear} on $\rC\bigl([0,1],\R\bigr)$ and let $\nu_n$
  be the stationary distribution of the linear finite element
  equation~\eqref{E:FE-linear} on~$\R^I$.  By construction of the
  process $U$ in the third statement of lemma~\ref{L:C-exact} and by
  the Markov property for Brownian bridges, the distribution of $U$
  between the grid points, conditioned on the values at the grid
  points, coincides with the distribution of $n$ independent Brownian
  bridges.  By lemma~\ref{L:exact} the distribution of $U$ on the grid
  points is given by~$\nu_n$.  Thus we have $\nu = \hat\nu_n$.  Using
  this equality and lemma \ref{L:Pi-and-hat} we find
  \begin{equation*}
    \bigl\| \mu - \hat\mu_n \bigr\|_{\mathrm{TV}}
    = \E_\nu\Bigl| \frac{d\mu}{d\nu} - \frac{d\hat\mu_n}{d\nu} \Bigr|
    = \E_\nu\Bigl| \frac{d\mu}{d\nu} - \frac{d\mu_n}{d\nu_n} \circ \Pi \Bigr|.
  \end{equation*}
  Now we are in the situation of equation~\eqref{E:proof-start} and
  the proof of theorem~\ref{T:result} applies without further changes.
\end{proof}

\section{Examples}
\label{S:example}

To illustrate that the suggested finite element method is a concrete
and implementable scheme, this section gives two examples for the
finite element discretisation of SPDEs, both in the context of infinite
dimensional sampling problems.

For the first example, for $c>0$, consider the SPDE
\begin{equation*}
  \d_t u (t,x) = \d_x^2 u(t,x) - c^2 u(t,x) + \sqrt{2} \d_t w(t,x)
  \qquad \forall (t,x) \in \R_+ \times (0,1)
\end{equation*}
with Robin boundary conditions
\begin{equation} \label{E:ex1-bc}
  \d_xu(t,0) = c u(t,0), \quad \d_x u(t,1) = - c u(t,1)
  \qquad \forall t\in \R_+,
\end{equation}
where $\d_t w$ is space-time white noise.  From~\citet{HaiStuaVo07} we
know that the stationary distribution of this SPDE on $\rC\bigl([0,1],
\R\bigr)$ coincides with the distribution of the process~$X$ given by
\begin{equation}\label{E:ex1-SDE}
  \begin{split}
  dX_\tau &= - c X_\tau \,d\tau + dW_\tau \qquad \forall \tau\in[0,1] \\
  X_0 &\sim \CN\bigl(0, \frac{1}{2c} \bigr),
  \end{split}
\end{equation}
where the time~$\tau$ in the SDE plays the r\^ole of the space~$x$ in
the SPDE.  In the framework of section~\ref{S:SPDE}, the boundary
conditions~\eqref{E:ex1-bc} correspond to the case $\alpha_0 =
\alpha_1 = c$ and $\beta_0 = \beta_1 = 1$.  Since $\beta_i \neq 0$, we
need to include both boundary points in the finite element
discretisation and thus have $I=\{ 0, 1, \ldots, n \}$ and $\R^I \cong
\R^{n+1}$.  The matrix $L^{(n)}$ is given by
\begin{equation*}
  L^{(n)} = \frac{1}{\DX}
  \begin{pmatrix}
    -1 - c \DX & 1 & & & \\
    1 & -2 & 1 & & \\
    & 1 & -2 & 1 & \\
    & & 1 & -2 & 1 \\
    & & & 1 & -1 - c \DX
  \end{pmatrix}
  \in \R^{(n+1)\times(n+1)},
\end{equation*}
where the middle rows are repeated along the diagonal to obtain
tridiagonal $(n+1)\times(n+1)$-matrices.  Similarly, the mass matrix
$M$ is given by
\begin{equation*}
  M = \frac{\DX}{6}
  \begin{pmatrix}
    2 & 1 & & & \\
    1 & 4 & 1 & & \\
    & 1 & 4 & 1 & \\
    & & 1 & 4 & 1 \\
    & & & 1 & 2
  \end{pmatrix}
  \in \R^{(n+1)\times(n+1)}
\end{equation*}
where, again, the middle rows are repeated along the diagonal.
Finally, it transpires that the discretised drift for this example is
given by $f_n(u) = - c Mu$.  By lemma~\ref{L:preconditioning} the
$n+1$ dimensional SDEs
\begin{equation*}
  dU_t = M^{-1} L^{(n)} U_t \,dt - c U_t \,dt + \sqrt{2} M^{-1/2} \,dW_t
\end{equation*}
and
\begin{equation*}
  dU_t = L^{(n)} U_t \,dt - c M U_t \,dt + \sqrt{2}\,dW_t,
\end{equation*}
where $W$ is an $(n+1)$-dimensional Brownian motion, both have the
same stationary distribution and this stationary distribution
converges to the distribution of the process~$X$
from~\eqref{E:ex1-SDE} in the sense given in theorem~\ref{T:result}
and corollary~\ref{C:other}.

As a second example, consider the SPDE
\begin{equation*}
  \d_t u (t,x) = \d_x^2 u(t,x) - \bigl(g g' + \frac12 g'')(u) + \sqrt{2} \d_t w(t,x)
  \qquad \forall (t,x) \in \R_+ \times (0,1)
\end{equation*}
with Dirichlet boundary conditions
\begin{equation*}
  u(t,0) = u(t,1) = 0
  \qquad \forall t\in \R_+,
\end{equation*}
where $g\in\rC^3\bigl(\R, \R\bigr)$ with bounded derivatives $g'$,
$g''$ and $g'''$.  From~\citet{HaiStuaVo07} we know that the
stationary distribution of this SPDE on $\rC\bigl([0,1], \R\bigr)$
coincides with the conditional distribution of the process~$X$ given
by
\begin{equation}\label{E:ex2-SDE}
  \begin{split}
  dX_\tau &= g(X_\tau) \,d\tau + dW_\tau \qquad \forall \tau\in[0,1] \\
  X_0 &= 0,
  \end{split}
\end{equation}
conditioned on $X_1 = 0$.

Since we have Dirichlet boundary conditions, the boundary points in
the finite element discretisation are not included: we have $I=\{1,
2, \ldots, n-1\}$ and $\R^I \cong \R^{n-1}$.  The matrices $L^{(n)}$
and $M$ are given by
\begin{equation*}
  L^{(n)} = \frac{1}{\DX}
  \begin{pmatrix}
    -2 & 1 & \\
    1 & -2 & 1 \\
    & 1 & -2
  \end{pmatrix}
  \in \R^{(n-1)\times(n-1)},
\end{equation*}
and
\begin{equation*}
  M = \frac{\DX}{6}
  \begin{pmatrix}
    4 & 1 & \\
    1 & 4 & 1 \\
    & 1 & 4
  \end{pmatrix}
  \in \R^{(n-1)\times(n-1)}
\end{equation*}
where, again, the middle rows are repeated along the diagonal to
obtain matrices of the required size.  The discretised drift~$f_n$ can
be computed from~\eqref{E:FE-f}; if an analytical solution is not
available, numerical integration can be used.  By the assumptions
on~$g$, the function $F = - \frac12 (g^2 + g')$ satisfies the
conditions of theorem~\ref{T:result}.  Thus, the stationary
distributions of the $(n-1)$-dimensional SDEs
\begin{equation*}
  dU_t
  = M^{-1} L^{(n)} U_t \,dt
    + M^{-1} f_n(U_t) \,dt
    + \sqrt{2} M^{-1/2}\, dW_t
\end{equation*}
and
\begin{equation*}
  dU_t
  = L^{(n)} U_t \,dt
    + f_n(U_t) \,dt
    + \sqrt{2}\, dW_t
\end{equation*}
coincide and converge to the conditional distribution of $X$
from~\eqref{E:ex2-SDE}, conditioned on~$X_1 = 0$.


\bibliographystyle{plainnat}
\bibliography{fe-stat}

\end{document}